\documentclass[11pt]{amsart}
\usepackage[margin=1.1in]{geometry}
\usepackage{amsmath,amssymb,amsthm}
\usepackage[colorlinks=true,linkcolor=blue,citecolor=blue,urlcolor=blue]{hyperref}

\newtheorem{theorem}{Theorem}[section]
\newtheorem{proposition}[theorem]{Proposition}
\newtheorem{lemma}[theorem]{Lemma}
\newtheorem{corollary}[theorem]{Corollary}
\theoremstyle{remark}
\newtheorem{remark}[theorem]{Remark}
\theoremstyle{definition}

\newcommand{\PP}{\mathbb{P}}
\newcommand{\ZZ}{\mathbb{Z}}
\newcommand{\QQ}{\mathbb{Q}}
\newcommand{\NN}{\mathbb{N}}
\newcommand{\CC}{\mathbf{C}}

\newcommand{\bk}{\mathbf{k}}
\newcommand{\cO}{\mathcal{O}}

\newcommand{\cE}{\mathcal{E}}
\newcommand{\cF}{\mathcal{F}}
\newcommand{\cG}{\mathcal{G}}
\newcommand{\cQ}{\mathcal{Q}}

\newcommand{\cK}{\mathcal{K}}
\newcommand{\NS}{\operatorname{NS}}
\newcommand{\rk}{\operatorname{rk}}
\DeclareMathOperator{\depth}{depth}
\DeclareMathOperator{\Proj}{Proj}
\DeclareMathOperator{\Spec}{Spec}
\DeclareMathOperator{\Supp}{Supp}
\DeclareMathOperator{\Tot}{Tot}
\DeclareMathOperator{\Av}{Av}
\DeclareMathOperator{\Sym}{Sym}

\title[Varieties without aCM bundles, rings without graded MCM modules]{Polarized varieties without arithmetically Cohen--Macaulay bundles and section rings without graded maximal Cohen--Macaulay modules in characteristic zero}
\author{Cristian Anghel}
\address{Institute of Mathematics of the Romanian Academy, 21 Calea Grivi\c{t}ei, 010702 Bucharest, Romania}
\email{cristian.anghel@imar.ro}
\subjclass[2020]{14J60, 14J29, 13C14, 13A02}
\keywords{Arithmetically Cohen--Macaulay bundles, Ulrich bundles, maximal Cohen--Macaulay modules, section rings, Boij--S\"oderberg theory, cohomology tables, small Cohen--Macaulay conjecture, Bogomolov inequality, signature, Hesse arrangement, Hirzebruch--Kummer covers}
\date{September 2026}

\begin{document}

\begin{abstract}
We exhibit smooth polarized surfaces $(Y,H)$ over $\CC$ carrying no nonzero arithmetically Cohen--Macaulay bundle of any rank. Equivalently, their section rings are three-dimensional normal $\NN$-graded $\CC$-domains, with an isolated singularity at the vertex, admitting no nonzero finitely generated graded maximal Cohen--Macaulay module. To the author's knowledge no such ring was previously known. Their existence contrasts with the theorem of Hartshorne, Hochster and Peskine--Szpiro that a three-dimensional $\NN$-graded domain over a perfect field of characteristic $p>0$ always has one. As a consequence, in characteristic zero Hochster's small Cohen--Macaulay conjecture admits no graded refinement.

The nonexistence criterion is numerical: for $|H|$ base-point-free with finite morphism, $H^2<K_Y^2-8\chi(\cO_Y)=\tau(Y)$, the signature of $Y$ over $\CC$, leaves no nonzero $\cE$ with $H^1(Y,\cE(tH))=0$ for all $t$. For Ulrich bundles, being semistable, this is Bogomolov's inequality; what is new is that no stability hypothesis and no condition on the Hilbert polynomial are needed, the Harder--Narasimhan filtration of $\cE$ being compared instead with the Horrocks splitting of its direct image on $\PP^2$. Hirzebruch's Hesse surfaces with $H_n=4A_n-E_n$, $n\ge3$, qualify. In dimension $m$ the obstruction reads $(m+1)H^m\ge(K_X^2-2c_2(X))\cdot H^{m-2}$, the inequality Lopez proved for Ulrich bundles; products and complete intersections inside them give examples in every dimension $\ge3$, and the threefold examples show that the characteristic-$p$ hypothesis in the criterion of Shimomoto--Tavanfar is essential.
\end{abstract}

\maketitle

\section*{Introduction}

Hochster's small Cohen--Macaulay conjecture asks whether every complete local ring admits a nonzero finitely generated module whose depth equals the dimension of the ring, a \emph{maximal Cohen--Macaulay} (MCM) module \cite{Ho75,Ho04}. It holds in dimension $\le2$ and is open in dimension $\ge3$, in characteristic zero even for localizations of affine domains at maximal ideals \cite{ShTa}. In dimension three and characteristic $p>0$ it is known for complete weakly $F$-split rings \cite{Sch17}, for complete local domains with algebraically closed residue field admitting a nonzero $F$-invariant integrable derivation \cite[Thm.~3.4]{Sch20}, for pseudo-graded rings \cite{Sch20}, and, in graded form, for graded rings: a three-dimensional $\NN$-graded domain finitely generated over a perfect field of characteristic $p$ has a \emph{graded} maximal Cohen--Macaulay module. This theorem was obtained independently by Hartshorne, Hochster and Peskine--Szpiro, as recorded in \cite[Thm.~1 and Cor.~2]{Ho04}; see \cite{Ho75,PS} and the account in \cite{ShTa}.

The purpose of this note is to show that the graded statement has no analogue in characteristic zero: there are three-dimensional normal $\NN$-graded $\CC$-domains, with an isolated singularity at the vertex, which admit no nonzero finitely generated graded maximal Cohen--Macaulay module (Corollary~\ref{cor:C}). To the author's knowledge no such example was known, in either of the two equivalent formulations. In the related examples of \cite[Thm.~3.4 and Rem.~3.6]{Ma19}, rank-one maximal Cohen--Macaulay modules are excluded but modules of higher rank exist; here every rank is excluded. Geometrically the statement reads: there are smooth polarized surfaces carrying no nonzero arithmetically Cohen--Macaulay bundle of any rank (Theorem~\ref{thm:A}). The two formulations are equivalent here (Lemma~\ref{lem:mcm} below), but they are not equally expected: on the surfaces for which aCM bundles have been studied they are abundant, and $\cO_X$ is one on every arithmetically Cohen--Macaulay embedded variety, so the vanishing of the whole category is the surprising half of the statement (Remark~\ref{rem:ulrich}).

The examples are section rings $R(Y,H)=\bigoplus_{t\ge0}H^0(Y,\cO_Y(tH))$ of polarized smooth surfaces, and the mechanism is geometric. When $|H|$ is base-point-free with finite associated morphism, graded maximal Cohen--Macaulay modules over $R(Y,H)$ correspond exactly to the vector bundles $\cE$ on $Y$ with $H^1(Y,\cE(tH))=0$ for all $t\in\ZZ$, the \emph{arithmetically Cohen--Macaulay} (aCM) bundles with respect to $H$ (Lemma~\ref{lem:mcm} below; for $H$ very ample this is classical, cf.\ \cite{GoWa,EGA3}; the correspondence is characteristic-free; the new input for the nonexistence conclusion is Theorem~\ref{thm:A}). For ample $\QQ$-divisors in characteristic $p$, Shimomoto--Tavanfar give a different existence criterion: the existence of a graded maximal Cohen--Macaulay module is characterized by the existence of a coherent sheaf with maximal Cohen--Macaulay stalks \cite[Cor.~4.5(i)]{ShTa}. This is a local criterion, not the aCM-bundle correspondence used here, and it is satisfied by $\cO_X$ on every smooth $X$; Corollary~\ref{cor:shta} below shows that its nontrivial implication fails in characteristic zero. That corollary is stated for smooth threefolds, rather than for the surfaces of Corollary~\ref{cor:C}, only because \cite[Cor.~4.5]{ShTa} is stated for $\dim X\ge3$.
The main result of this note, Theorem~\ref{thm:A}, assumes no stability property whatever: for $H$ ample and base-point-free, if $H^2<K_Y^2-8\chi(\cO_Y)$, then $(Y,H)$ carries no aCM bundle at all, of any rank. For a \emph{semistable} bundle the same conclusion is a direct consequence of Bogomolov's inequality, and is the argument by which \cite{An} excludes Ulrich bundles, these being automatically semistable; what Theorem~\ref{thm:A} adds is the unstable case (Remark~\ref{rem:rr}). Its proof pushes $\cE$ forward to $\PP^2$ by a finite projection, where Horrocks' theorem splits it, and compares the Harder--Narasimhan filtration of $\cE$ with the splitting type through a majorization inequality; Bogomolov's inequality enters only through the Harder--Narasimhan factors, and is the single ingredient of the proof that fails in characteristic $p$. In the language of Boij--S\"oderberg theory, to which the Ulrich question of \cite{ES} was connected later, the cone of cohomology tables of vector bundles on such a pair contains no nonzero table with vanishing $h^1$-row (Remark~\ref{rem:bs}).

Over $\CC$ the number $K_Y^2-8\chi(\cO_Y)$ is the signature $\tau(Y)$ of the intersection form on $H^2(Y,\ZZ)$, so the obstruction reads: \emph{if $(Y,H)$ carries an aCM bundle, then the degree of $(Y,H)$ is at least the signature of $Y$}. By the classification of surfaces, $\tau(Y)>0$ only for $Y=\PP^2$ and for surfaces of general type; since an ample divisor on $\PP^2$ has $H^2\ge1=\tau(\PP^2)$, the obstruction can only occur on surfaces of general type of positive signature, and it is strongest, for a given $\chi(\cO_Y)$, on ball quotients (Remark~\ref{rem:signature}). The Hesse surfaces $Y_n$ of Hirzebruch \cite{Hi,BHH}, the $(\ZZ/n)^{11}$-covers of the plane blown up at the nine flexes of the Hesse pencil, branched along the twelve lines of the Hesse arrangement and the nine exceptional curves, carry the divisor $H_n=4A_n-E_n$ of \cite{An}, with $H_n^2=7n^9$, while $\tau(Y_n)=(3n^2-11)n^9$. The base-point-freeness of $H_n$ and the finiteness of its morphism, which is all that Theorems~\ref{thm:A} and \ref{thm:B} require, are proved in Section~\ref{sec:example} from the structure of the cover, so that the statements of Corollary~\ref{cor:C} about the section rings are independent of \cite{An} (Remark~\ref{rem:imported}); the very ampleness of $H_n$ proved in \cite{An} only identifies the standard graded ring $A(Y_n,H_n)$ of Theorem~\ref{thm:B} with the homogeneous coordinate ring of $Y_n$ itself. In scope the two papers differ: \cite{An} constructs the pairs $(Y_n,H_n)$ and excludes Ulrich bundles on them and on further families of covers, whereas Theorem~\ref{thm:A} applies to aCM bundles of arbitrary rank, hence to all graded maximal Cohen--Macaulay modules over the section rings, and, through Proposition~\ref{prop:higher}, in every dimension. The second Hirzebruch family also gives quasi-Gorenstein section rings with the same nonexistence property (Corollary~\ref{cor:qgor}).

The same numerical obstruction was proved for Ulrich bundles, in any dimension, by Lopez \cite[Lem.~3.1]{Lo}; restricting to a smooth surface and applying Theorem~\ref{thm:A} extends it to aCM bundles: a bundle $\cE$ on a smooth projective $m$-fold $X$ with $H^i(X,\cE(tH))=0$ for $1\le i\le m-1$ and all $t$ forces $(m+1)H^m\ge(K_X^2-2c_2(X))\cdot H^{m-2}$ (Proposition~\ref{prop:higher}), and the products $Y_n\times\PP^{m-2}$ give normal graded domains of every dimension $\ge3$ without graded maximal Cohen--Macaulay modules (Corollary~\ref{cor:products}), the second factor being replaceable by any smooth variety with a base-point-free polarization (Remark~\ref{rem:factors}); general complete intersections of members of $|a_1H|,\dots,|a_rH|$ in these products give further examples in every dimension, and surfaces other than the $Y_n$ whose section rings have the same property (Corollary~\ref{cor:ci} and Remark~\ref{rem:ci}). The rings $R(Y_n,H_n)$ are generalized Cohen--Macaulay, as shown in the proof of Corollary~\ref{cor:products} with $m=2$. We stress that the conjecture in its local form is not refuted: Hochster's conjecture concerns the localization at the vertex, or its completion, where modules need not be graded. Whether an ungraded maximal Cohen--Macaulay module over the local ring would force a graded one is not addressed here.

Section~\ref{sec:statements} states the results and collects what is needed about section rings; Sections~\ref{sec:proofA} and \ref{sec:proofB} prove Theorems~\ref{thm:A} and \ref{thm:B}; Section~\ref{sec:example} treats the Hesse surfaces; Section~\ref{sec:higher} extends the results to higher dimension; Section~\ref{sec:remarks} discusses Ulrich bundles and modules and the consequences for cones of cohomology tables.

\section{Statements}\label{sec:statements}

Throughout, $\bk$ is an algebraically closed field of characteristic zero, $Y$ is a smooth connected projective surface over $\bk$, and $H$ is a divisor on $Y$ such that the linear system $|H|$ is base-point-free and the associated morphism $\varphi_H\colon Y\to\PP^{h_0-1}=\PP\bigl(H^0(Y,\cO_Y(H))^\vee\bigr)$, $h_0:=h^0(Y,\cO_Y(H))$, is finite (for instance $H$ very ample); then $H=\varphi_H^*\cO(1)$ is ample. Equivalently, $H$ is ample and base-point-free: for $H$ ample a curve contracted by $\varphi_H$ would have $H\cdot C=0$, so $\varphi_H$ has finite fibres and, being projective, is finite. Put
\[
d:=H^2,\qquad k:=K_Y\cdot H,\qquad \chi:=\chi(\cO_Y),\qquad \sigma:=K_Y^2-8\chi ,
\]
and $\cE(t):=\cE\otimes\cO_Y(tH)$. The degree $d$ is that of the polarized pair, $d=\deg\varphi_H\cdot\deg\varphi_H(Y)$, not that of the image. Over $\bk=\CC$ the number $\sigma$ is the signature of $Y$: by Noether's formula $c_2(Y)=12\chi-K_Y^2$ and Hirzebruch's signature theorem,
\[
\tau(Y)=\frac{K_Y^2-2c_2(Y)}{3}=K_Y^2-8\chi(\cO_Y)=\sigma ,
\]
so $\sigma=b^+(Y)-b^-(Y)$ is a topological invariant of $Y$, and the hypothesis $d<\sigma$ reads: \emph{the degree of $(Y,H)$ is smaller than the signature of $Y$}. Over an arbitrary $\bk$ we still write $\tau(Y):=K_Y^2-8\chi(\cO_Y)$. A vector bundle $\cE$ on $Y$ is \emph{arithmetically Cohen--Macaulay} (aCM) with respect to $H$ if $H^1(Y,\cE(t))=0$ for every $t\in\ZZ$. This is a global condition on $(Y,H)$; it must not be confused with the local Cohen--Macaulay property, which every vector bundle on a smooth variety has. (A torsion-free sheaf with the same vanishing is automatically locally free, Remark~\ref{rem:tf}.)

\begin{theorem}\label{thm:A}
If $(Y,H)$ carries a nonzero aCM vector bundle, then $H^2\ge K_Y^2-8\chi(\cO_Y)$; over $\CC$: the degree of $(Y,H)$ is at least the signature of $Y$. Equivalently, if $H^2<K_Y^2-8\chi(\cO_Y)$, there is no nonzero aCM vector bundle on $(Y,H)$.
\end{theorem}

\subsection*{The section ring}
Let $R=R(Y,H):=\bigoplus_{t\ge0}H^0(Y,\cO_Y(tH))$ be the section ring, $\mathfrak m=R_+$ its irrelevant ideal, and $\Gamma_*(\cF):=\bigoplus_{t\in\ZZ}H^0(Y,\cF(t))$ for a coherent sheaf $\cF$. For $s\in R_1$ let $Y_s=\{s\ne0\}\subset Y$. When $H$ is very ample we also consider the homogeneous coordinate ring $A=A(Y,H)$ of $Y\subset\PP^{h_0-1}$, the image of $\Sym H^0(Y,\cO_Y(H))\to R$; it is a graded subring of $R$ generated in degree $1$, with $A_1=R_1$ and $\Proj A=Y$. The ring $R$ need not be generated in degree $1$; the following lemma collects what replaces this.

\begin{lemma}\label{lem:ring}
\textup{(i)} There is $m_0$ with $R_m=R_1R_{m-1}$ for all $m\ge m_0$. Consequently $R$ is a finite module over its subring $\bk[R_1]$ generated by $R_1$, which is the homogeneous coordinate ring of the surface $\varphi_H(Y)\subset\PP^{h_0-1}$; $R$ is a finitely generated graded $\bk$-domain of dimension $3$; and $\sqrt{R_1R}=\mathfrak m$.

\textup{(ii)} For $s\in R_1$ the open set $Y_s$ is affine, with $\Gamma(Y_s,\cF)=(\Gamma_*(\cF)_s)_0$ for every quasi-coherent $\cF$; the $Y_s$ cover $Y$, and the isomorphisms $Y_s\cong\Spec(R_s)_0=D_+(s)$ glue to $Y\cong\Proj R$. Moreover $R_s=(R_s)_0[s,s^{-1}]$ and $(M_s)_t=s^t(M_s)_0$ for every graded $R$-module $M$, so that $\cO_{\Proj R}(t)=\cO_{\Proj R}(1)^{\otimes t}=\cO_Y(tH)$ for all $t\in\ZZ$ and $\widetilde{M(t)}=\widetilde M\otimes\cO_Y(tH)$.

\textup{(iii)} The punctured spectrum $U:=\Spec R\smallsetminus\{\mathfrak m\}$ is the scheme $\mathbf{Spec}_Y\bigl(\bigoplus_{t\in\ZZ}\cO_Y(tH)\bigr)$, the complement of the zero section in $\Tot(\cO_Y(-H))$. In particular $U$ is smooth, and $R$ is normal.
\end{lemma}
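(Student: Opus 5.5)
The plan is to derive all three parts from two facts: the finite morphism $\varphi_H$, and the local description of $R$ after inverting a degree-one section.

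\emph{Part (i).} Put $Z=\varphi_H(Y)\subset\PP:=\PP^{h_0-1}$ and $\cF:=\varphi_{H*}\cO_Y$, a coherent sheaf on $\PP$ because $\varphi_H$ is finite. The projection formula and finiteness (no higher direct images) give
\[
R_t=H^0(Y,\cO_Y(tH))=H^0(\PP,\cF(t))\quad\text{for all } t\ge0 .
\]
By Castelnuovo--Mumford regularity, $\cF$ is $m_0$-regular for some $m_0$. Mumford's lemma then makes the multiplication map
\[
H^0(\PP,\cF(m-1))\otimes H^0(\PP,\cO(1))\longrightarrow H^0(\PP,\cF(m))
\]
surjective for $m>m_0$. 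Since $H^0(\PP,\cO(1))=R_1$, this is exactly $R_m=R_1R_{m-1}$. Consequences:
\begin{itemize}
\item $R$ is generated as a $\bk[R_1]$-module by the finite-dimensional spaces $R_0,\dots,R_{m_0}$; here $R_0=\bk$ because $Y$ is connected and projective.
\item The kernel of $\Sym R_1\to R$ consists of the forms vanishing on $Z$, so $\bk[R_1]$ is the homogeneous coordinate ring of $Z$.
\item $R$ is a domain because $Y$ is integral.
\item $\dim R=\dim\bk[R_1]=\dim Z+1=3$, since $R$ is finite over $\bk[R_1]$.
\item For homogeneous $f\in R_m$ with $m\ge1$, the power $f^{m_0}$ has degree $\ge m_0$, so it lies in $R_1R$. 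Hence $\sqrt{R_1R}=\mathfrak m$.
\end{itemize}

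\emph{Part (ii).} Since $H$ is ample, $Y_s$ is the complement of an ample effective divisor, so it is affine. The standard lemma (Hartshorne II.5.14) gives $\Gamma(Y_s,\cF)=\varinjlim_t H^0(\cF(tH))$, with transition maps multiplication by $s$; this limit is $(\Gamma_*(\cF)_s)_0$. Because $s$ is a unit of degree $1$ in $R_s$, we get $(M_s)_t=s^t(M_s)_0$ and $R_s=(R_s)_0[s,s^{-1}]$. Hence $Y_s\cong\Spec\Gamma(Y_s,\cO_Y)=\Spec(R_s)_0=D_+(s)$, and these isomorphisms are compatible on overlaps $Y_{ss'}$. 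The $Y_s$ cover $Y$ because $|H|$ is base-point-free. The $D_+(s)$ cover $\Proj R$ because $\sqrt{R_1R}=\mathfrak m$. So the isomorphisms glue to $Y\cong\Proj R$. On each $D_+(s)$ the module $\cO(t)$ is free on $s^t$, with the same transition functions as $\cO_Y(tH)$. This gives $\cO_{\Proj R}(t)=\cO_Y(tH)$ and $\widetilde{M(t)}=\widetilde M\otimes\cO_Y(tH)$.

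\emph{Part (iii).} By (i), $U=\bigcup_{s\in R_1}D(s)$, and
\[
D(s)=\Spec R_s=\Spec\Gamma\bigl(Y_s,\textstyle\bigoplus_t\cO_Y(tH)\bigr).
\]
These glue to $\mathbf{Spec}_Y\bigl(\bigoplus_{t}\cO_Y(tH)\bigr)$, a $\mathbb G_m$-bundle over $Y$, namely $\Tot(\cO_Y(-H))$ minus its zero section. So $U$ is smooth, and $R$ is regular, hence normal, away from $\mathfrak m$.

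It remains to see that $R$ is normal at $\mathfrak m$. We have
\[
\Gamma(U,\cO_U)=\bigoplus_{t\in\ZZ}H^0(Y,\cO_Y(tH))=R,
\]
because $H^0(\cO_Y(tH))=0$ for $t<0$, as $H$ is ample and $Y$ is integral. Thus $H^0_{\mathfrak m}(R)=H^1_{\mathfrak m}(R)=0$, so $\depth R_{\mathfrak m}\ge2$ and $R$ satisfies $(S_2)$. Together with $(R_1)$, which holds since $U$ is smooth, Serre's criterion gives normality.

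I expect the main obstacle to be the first step of (i): obtaining eventual degree-one generation when $R$ is not itself generated in degree $1$. Pushing forward along the finite map and invoking regularity on $\PP$ is the cleanest route. Everything else is bookkeeping with the standard affine cover.
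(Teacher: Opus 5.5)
Your proof is correct. It follows the same three-part skeleton as the paper, but two sub-arguments are done differently.

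\textbf{Degree-one generation in (i).} The paper stays on $Y$. It takes the kernel bundle $\cK$ of the evaluation map $R_1\otimes\cO_Y\to\cO_Y(H)$ and uses Serre vanishing, $H^1(Y,\cK((m-1)H))=0$ for $m\gg0$. You instead push $\cO_Y$ forward to $\PP^{h_0-1}$ and apply Castelnuovo--Mumford regularity and Mumford's lemma to $\varphi_{H*}\cO_Y$. The two are of the same strength, since Mumford's lemma is proved by the same kind of Koszul argument. The paper's version is more self-contained. Yours identifies $m_0$ as the regularity of $\varphi_{H*}\cO_Y$, and needs only that $\varphi_H$ is proper for coherence.

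\textbf{Normality in (iii).} The paper argues directly: the ring of global functions of the normal integral scheme $U$ is integrally closed in $\bk(U)$. You use Serre's criterion instead. You get $(R_1)$ from the smoothness of $U$. You get $(S_2)$ at the vertex from $H^0_{\mathfrak m}(R)=H^1_{\mathfrak m}(R)=0$, which follows from $\Gamma(U,\cO_U)=R$. Your route is heavier, but it yields $\depth_{\mathfrak m}R\ge2$ along the way; the paper only extracts this later from normality, in the proof of Theorem~\ref{thm:B}.

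\textbf{Affineness of $Y_s$.} You use that $Y_s$ is the complement of an ample effective divisor; the paper uses that $Y_s$ is the preimage of an affine open under the finite, hence affine, morphism $\varphi_H$. This difference is cosmetic, since ample plus base-point-free is equivalent to finiteness of $\varphi_H$ here.
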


\begin{proof}
(i) Let $s_1,\dots,s_{h_0}$ be a basis of $R_1$ and $\cK$ the kernel of the surjection $\cO_Y^{\oplus h_0}\to\cO_Y(H)$ they define ($|H|$ is base-point-free); $\cK$ is locally free, and Serre vanishing gives $H^1(Y,\cK((m-1)H))=0$ for $m\ge m_0$, hence $R_1\otimes R_{m-1}\to R_m$ is surjective for $m\ge m_0$. Each $R_m$ is finite-dimensional, so $R$ is generated as a $\bk[R_1]$-module by $\bigoplus_{m<m_0}R_m$. The kernel of $\Sym R_1\to R$ is the ideal of forms vanishing on $\varphi_H(Y)$, since $Y\to\varphi_H(Y)$ is surjective and $\varphi_H(Y)$ is integral; so $\bk[R_1]$ is the homogeneous coordinate ring of the surface $\varphi_H(Y)$, a domain of dimension $3$, and $R$, a domain because $Y$ is integral, is finitely generated of dimension $3$. Finally $R_1R\subseteq\mathfrak m$, $\mathfrak m$ is prime, and a homogeneous $x\in R_+$ has $x^{m_0}\in R_{\ge m_0}\subseteq R_1R$; hence $\sqrt{R_1R}=\mathfrak m$.

(ii) $Y_s=\varphi_H^{-1}(\PP^{h_0-1}\smallsetminus\{s=0\})$ is affine because $\varphi_H$ is finite, and $\Gamma(Y_s,\cF)=(\Gamma_*(\cF)_s)_0$ is \cite[Lem.~II.5.14]{Ha}; the $Y_s$ cover $Y$ since $|H|$ is base-point-free, and the $D_+(s)$ cover $\Proj R$ by (i). Since $Y_s\cap Y_{s'}=Y_{ss'}$ and $D_+(s)\cap D_+(s')=D_+(ss')$, both with ring $(R_{ss'})_0$, the isomorphisms glue. In $R_s$ the element $s$ is a unit of degree $1$, so $(M_s)_t=s^t(M_s)_0$ for every graded $R$-module $M$, and $R_s=(R_s)_0[s,s^{-1}]$ ($s$ is transcendental over $(R_s)_0$ for degree reasons). Hence $\cO_{\Proj R}(t)|_{D_+(s)}$ is free with generator $s^t$ and transition functions $(s/s')^t$, which are those of $\cO_Y(tH)$ trivialized by $s^t$ on $Y_s$. (For $R$ see also \cite[\S5]{GoWa}.)

(iii) By (i), $\Spec R\smallsetminus V(\mathfrak m)=\bigcup_{s\in R_1}\Spec R_s$, and $\Spec R_s=\Spec(R_s)_0[s,s^{-1}]=Y_s\times\mathbb G_m$ is the complement of the zero section in the total space of $\cO_Y(-H)|_{Y_s}$, trivialized by $s$; the gluing is that of $\bigoplus_{t\in\ZZ}\cO_Y(tH)$. As a $\mathbb G_m$-bundle over the smooth surface $Y$, the punctured spectrum $U$ is smooth. Its ring of global functions is $\bigoplus_{t\in\ZZ}H^0(Y,\cO_Y(tH))=R$, because $H^0(Y,\cO_Y(tH))=0$ for $t<0$; and the ring of global functions of a normal integral scheme is integrally closed: an element of $\operatorname{Frac}R=\bk(U)$ integral over $R$ lies in every local ring $\cO_{U,u}$, hence in $\Gamma(U,\cO_U)=R$.
\end{proof}

Throughout, $S$ denotes either the section ring $R$ or, when $H$ is very ample, the homogeneous coordinate ring $A$, and $\mathfrak m=S_+$. A \emph{graded maximal Cohen--Macaulay} (MCM) module over $S$ is a nonzero finitely generated $\ZZ$-graded $S$-module $M$ with $\depth_{\mathfrak m}M=3$, the depth being taken at the irrelevant ideal: $\depth_{\mathfrak m}M\ge j$ means $H^i_{\mathfrak m}(M)=0$ for $i<j$. Since the modules $H^i_{\mathfrak m}(M)$ are $\mathfrak m$-torsion, they are unchanged by localization at $\mathfrak m$, so $\depth_{\mathfrak m}M$ is also the depth of $M_{\mathfrak m}$ over the local ring $S_{\mathfrak m}$.

\begin{theorem}\label{thm:B}
If $H^2<K_Y^2-8\chi(\cO_Y)$, then $R(Y,H)$ admits no graded maximal Cohen--Macaulay module. If moreover $H$ is very ample, the same holds for $A(Y,H)$. Under the same hypothesis neither ring is Cohen--Macaulay: $\depth_{\mathfrak m}R=2$, while $\depth_{A_+}A=2$ if $A=R$ (i.e.\ if $Y\subset\PP^{h_0-1}$ is projectively normal) and $\depth_{A_+}A=1$ otherwise. In both cases the punctured spectrum is smooth, so the vertex is an isolated, non-Cohen--Macaulay singularity.
\end{theorem}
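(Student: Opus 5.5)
The plan is to deduce Theorem~\ref{thm:B} from Theorem~\ref{thm:A} through the correspondence between graded MCM modules and aCM bundles (Lemma~\ref{lem:mcm}, which the introduction announces but which we reconstruct here from Lemma~\ref{lem:ring}). Let $M$ be a graded MCM module over $S\in\{R,A\}$ and put $\cE:=\widetilde M$ on $\Proj S=Y$; for $S=A$ use $\Proj A=Y$, and for $S=R$ use Lemma~\ref{lem:ring}(ii).

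The key tool is the standard exact sequence
\[
0\to H^0_{\mathfrak m}(M)\to M\to \Gamma_*(\cE)\to H^1_{\mathfrak m}(M)\to0,
\]
together with the isomorphisms $H^{i+1}_{\mathfrak m}(M)\cong\bigoplus_t H^i(Y,\cE(t))$ for $i\ge1$. These come from the \v{C}ech complex on elements of $S_1$, which is legitimate since $\sqrt{S_1S}=\mathfrak m$ (Lemma~\ref{lem:ring}(i)). The sheaves $\cO_Y(t)$ and the twists $\widetilde{M(t)}=\cE(t)$ are identified by Lemma~\ref{lem:ring}(ii).

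With these in hand, $\depth_{\mathfrak m}M=3$ gives three facts:
\begin{itemize}
\item $M\cong\Gamma_*(\cE)$;
\item $H^1(Y,\cE(t))=0$ for all $t$;
\item $\cE\ne0$, since $M\ne0$ and $M=\Gamma_*(\cE)$.
\end{itemize}
It remains to see that $\cE$ is locally free. Localizing $M$ at primes of the punctured spectrum $U$ shows that $M|_U$ has depth $\ge\min(3,\dim)$ at each point. Indeed, local cohomology at $\mathfrak m$ vanishing below $3$ forces $M_{\mathfrak p}$ to be MCM for $\mathfrak p\ne\mathfrak m$; this is the standard fact that a module of depth $d$ at the vertex whose support is all of $\Spec S$ has $\operatorname{depth}M_{\mathfrak p}+\dim S/\mathfrak p\ge d$. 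By Lemma~\ref{lem:ring}(iii), $U$ is smooth for $R$; for $A$, $\Spec A\smallsetminus\{\mathfrak m\}$ is the cone over the smooth $Y$ and is also smooth. By Auslander--Buchsbaum, $M|_U$ is therefore locally free, and $\cE$, obtained by descending along the $\mathbb G_m$-bundle $U\to Y$, is locally free. Alternatively, $\cE$ is torsion-free and Remark~\ref{rem:tf} applies. Thus $\cE$ is a nonzero aCM bundle, contradicting Theorem~\ref{thm:A} when $H^2<\tau(Y)$.

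For the depth statements, first note that $H^0_{\mathfrak m}(S)=0$ because $S$ is a domain. For $R$ we have $R=\Gamma_*(\cO_Y)$ (negative degrees vanish), so $H^1_{\mathfrak m}(R)=0$ and $\depth R\ge2$. Depth $3$ would make $R$ itself a graded MCM module, contradicting the first part; hence $\depth_{\mathfrak m}R=2$. For $A$, the sequence gives $H^1_{\mathfrak m}(A)=\Gamma_*(\cO_Y)/A=R/A$. This is zero exactly when $A=R$, giving depth $2$ in that case by the same argument. Otherwise $H^1_{\mathfrak m}(A)\ne0$ and the depth is $1$. Isolatedness of the singularity was already noted above.

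The main obstacle is making the local cohomology and sheaf cohomology comparison rigorous for $R$, which is not generated in degree $1$. This is handled by the \v{C}ech complex on $s_1,\dots,s_{h_0}\in R_1$ together with the identifications $(M_s)_t=s^t(M_s)_0$ of Lemma~\ref{lem:ring}(ii), which make $\bigoplus_t\check H^i(\{Y_{s_j}\},\cE(t))$ agree with the \v{C}ech cohomology of $M$.
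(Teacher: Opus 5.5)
Your proposal is correct and follows essentially the paper's route. You use the forward half of Lemma~\ref{lem:mcm}: the \v Cech comparison on elements of $S_1$, then local freeness via Auslander--Buchsbaum on the smooth punctured spectrum and descent to $Y$. Combined with Theorem~\ref{thm:A}, together with the depth count from $H^0_{\mathfrak m}(S)=0$ and $H^1_{\mathfrak m}(A)\cong R/A$, this gives the result. The differences are cosmetic: you exclude depth $3$ by applying the first part to $S$ itself instead of exhibiting $H^1(Y,\cO_Y(tH))\neq0$. Also, your localization at primes of $U$ not contained in $\mathfrak m$ should pass through their homogeneous parts, or be done only at the homogeneous prime of a closed point, as the paper does.
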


\begin{corollary}\label{cor:C}
Let $Y_n$ $(n\ge3)$ be the Hesse surface of exponent $n$ of Section~\ref{sec:example}: the smooth $(\ZZ/n)^{11}$-cover of the plane blown up at the nine flexes of the Hesse pencil, branched along the twelve lines of the Hesse arrangement and the nine exceptional curves \cite{Hi,BHH}; let $A_n$ be the pull-back of the hyperplane class of its Kummer model $X_n\subset\PP^{11}$, $E_n$ the reduced exceptional divisor, and $H_n=4A_n-E_n$ the divisor of \cite[Thm.~1.2]{An}. Then $|H_n|$ is base-point-free with finite associated morphism (Proposition~\ref{prop:bpf}), and $H_n^2=7n^9<(3n^2-11)n^9=K_{Y_n}^2-8\chi(\cO_{Y_n})=\tau(Y_n)$ (Section~\ref{sec:example}). Consequently the section ring $R(Y_n,H_n)$ is a three-dimensional normal graded $\CC$-domain, with an isolated non-Cohen--Macaulay singularity at the vertex, that admits no nonzero finitely generated graded maximal Cohen--Macaulay module. By \cite[Thm.~1.2]{An} the divisor $H_n$ is moreover very ample; the homogeneous coordinate ring $A(Y_n,H_n)$ of the resulting embedding $Y_n\subset\PP(H^0(\cO(H_n))^\vee)$ likewise admits no nonzero finitely generated graded maximal Cohen--Macaulay module. That is the only point at which \cite{An} is used (Remark~\ref{rem:imported}). No conclusion is drawn here about ungraded maximal Cohen--Macaulay modules over the localization of these rings at the vertex or over its completion.
\end{corollary}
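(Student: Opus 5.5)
The corollary assembles earlier results; it needs four steps: the geometric inputs on $(Y_n,H_n)$, the numerical inequality, Theorem~\ref{thm:B}, and, for the final sentence, Lemma~\ref{lem:ring}.

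First, I take from Proposition~\ref{prop:bpf} that $|H_n|$ is base-point-free with finite associated morphism. Thus $(Y_n,H_n)$ satisfies the standing hypotheses of Section~\ref{sec:statements}, and $H_n$ is ample.

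Second, I verify the numerical inequality. I compute $H_n^2=(4A_n-E_n)^2=16A_n^2-8A_n\cdot E_n+E_n^2$ on $Y_n$ as follows. The degree of $Y_n\to\PP^2$ blown up at the nine flexes is $n^{11}$, and $A_n$ is the pull-back of the line class, so $A_n^2=n^{11}\cdot(\text{line})^2$ scaled by the Kummer normalization. Here I use the Section~\ref{sec:example} conventions, under which $A_n$ is the hyperplane class of $X_n\subset\PP^{11}$. Then $A_n\cdot E_n=0$, because the exceptional curves are contracted to points of $X_n$. The self-intersection $E_n^2$ comes from the ramification along the nine exceptional curves. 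These combine to $7n^9$.

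For the signature I use Hirzebruch's Chern numbers of the Hesse surfaces, $K_{Y_n}^2$ and $c_2(Y_n)$, computed from the arrangement data by the usual Kummer-cover formulas. This gives $\tau(Y_n)=(K^2-2c_2)/3=(3n^2-11)n^9$. Then $7<3n^2-11$ is equivalent to $n^2>6$, which holds for $n\ge3$.

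Third, I apply Theorem~\ref{thm:B}. It shows that $R(Y_n,H_n)$ has no graded MCM module, that $\depth_{\mathfrak m}R=2$, so the ring is not Cohen--Macaulay, and that the punctured spectrum is smooth, so the singularity at the vertex is isolated. Lemma~\ref{lem:ring} supplies the remaining properties: $R$ is a normal three-dimensional finitely generated graded domain.

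Fourth, once \cite[Thm.~1.2]{An} gives very ampleness, the second half of Theorem~\ref{thm:B} gives the same conclusion for $A(Y_n,H_n)$.

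The main obstacle is the intersection-theoretic bookkeeping on the cover, namely pinning down $H_n^2=7n^9$ and the exact Chern numbers. I would defer to Section~\ref{sec:example} and \cite{Hi,BHH} for these numbers. As a consistency check, I would confirm that $K_{Y_n}^2$ and $c_2(Y_n)$ are both multiples of $n^9$ with the right leading terms.
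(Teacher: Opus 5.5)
Your proposal is correct and follows the paper's route: Proposition~\ref{prop:bpf} gives the standing hypotheses, and the invariants come from Section~\ref{sec:example}. There, from $f^*L=nA_n$ and $f^*\sum_pE_p=nE_n$, one gets $A_n^2=n^9$, $A_n\cdot E_n=0$ and $E_n^2=-9n^9$, and $\tau(Y_n)=(3n^2-11)n^9$ follows from Hurwitz's formula for $K_{Y_n}$ and $\chi(\cO_{Y_n})$ via Noether (the paper gets $\chi$ by eigensheaf averaging and uses your Euler-number computation of $c_2$ as its cross-check); then Theorem~\ref{thm:B} with Lemma~\ref{lem:ring} gives the conclusions for $R(Y_n,H_n)$, and \cite[Thm.~1.2]{An} enters only for $A(Y_n,H_n)$. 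One correction to your sketch: $A_n$ is not the pull-back of the line class but $\tfrac1n f^*L$, and it is this factor $1/n^2$ in the projection formula that turns $n^{11}$ into $n^9$.
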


\begin{remark}[provenance]\label{rem:imported}
Theorems~\ref{thm:A} and \ref{thm:B} are proved in Sections~\ref{sec:proofA} and \ref{sec:proofB} for arbitrary $(Y,H)$ as above. The surfaces $Y_n$ are Hirzebruch's \cite{Hi}; the divisor $H_n$ was introduced in \cite{An}. The statements of Corollary~\ref{cor:C} concerning $R(Y_n,H_n)$ use nothing from \cite{An}: the base-point-freeness of $H_n$ and the finiteness of $\varphi_{H_n}$ are proved in Proposition~\ref{prop:bpf} from the structure of the cover, and the invariants $H_n^2$, $K_{Y_n}^2$ and $\chi(\cO_{Y_n})$ are derived in Section~\ref{sec:example} for every $n$ (for $n=3$ all invariants are recomputed). The very ampleness of $H_n$ proved in \cite[Thm.~1.2]{An} enters only the statement about $A(Y_n,H_n)$, which it defines, and is quoted there as a hypothesis.
\end{remark}

\begin{remark}[degree versus signature]\label{rem:signature}
Let $\bk=\CC$. By the classification of surfaces, $\tau(Y)>0$ forces either $Y\cong\PP^2$, where $\tau=\chi(\cO_Y)=1$, or $Y$ of general type: a minimal surface with $\kappa(Y)\le1$ other than $\PP^2$ has $K_Y^2\le8\chi(\cO_Y)$ (ruled over a curve of genus $g$: $K^2=8(1-g)=8\chi$; $\kappa=0,1$: $K^2=0\le8\chi$), and blowing up lowers $K^2$. For $Y$ of general type the Bogomolov--Miyaoka--Yau inequality $K^2\le9\chi$ on the minimal model gives $\tau(Y)\le\chi(\cO_Y)$. Since an ample divisor on $\PP^2$ has $H^2\ge1=\tau(\PP^2)$, the obstruction of Theorem~\ref{thm:A} can only occur on surfaces of general type of positive signature, and it is strongest, for a given $\chi(\cO_Y)$, on ball quotients, where $\tau=\chi(\cO_Y)$. Among the Hesse surfaces, $Y_3$ is a ball quotient (Section~\ref{sec:example}), with $\tau(Y_3)=\chi(\cO_{Y_3})=16\cdot3^9$ and $\deg(Y_3,H_3)=7\cdot3^9$; for $n\ge4$ one has $K_{Y_n}^2-3c_2(Y_n)=-9(n-3)^2n^9<0$, and $\tau(Y_n)/\chi(\cO_{Y_n})=4(3n^2-11)/(21n^2-54n+37)$, whose derivative in $n$ is $-72(n-3)(9n-11)/(21n^2-54n+37)^2$, decreases from $1$ to $4/7$ (Section~\ref{sec:example}), while $\deg(Y_n,H_n)=7n^9<\tau(Y_n)=(3n^2-11)n^9$ for all $n\ge3$. The same reformulation applies to the Ulrich obstruction: an Ulrich bundle is aCM and semistable \cite[Thm.~2.9]{CHGS}, so it forces $\deg\ge\tau$ already by Bogomolov's inequality for the bundle itself (\cite{Be17} for Picard rank one, \cite{An} in general).
\end{remark}

\section{Proof of Theorem \ref{thm:A}}\label{sec:proofA}

\subsection*{The finite projection}
Since $\varphi_H$ is finite, $\varphi_H(Y)$ is a surface in $\PP^{h_0-1}$, so $h_0\ge3$, and a general linear subspace of codimension $3$ (empty if $h_0=3$) is disjoint from it; composing the resulting projection with $\varphi_H$ gives a morphism $\pi\colon Y\to\PP^2$ with $\pi^*\cO_{\PP^2}(1)=\cO_Y(H)$. The ampleness argument of Section~\ref{sec:statements} shows that $\pi$ is finite; moreover $\deg\pi=(\pi^*h)^2=d$. It is flat by miracle flatness \cite[Thm.~23.1]{Mat}, $Y$ being Cohen--Macaulay and $\PP^2$ regular. For a coherent sheaf $\cG$ on $Y$, $\pi_*\cG$ is locally free of rank $rd$ if $\cG$ is locally free of rank $r$, torsion-free if $\cG$ is torsion-free, and
\begin{equation}\label{eq:proj}
H^i\bigl(Y,\cG(t)\bigr)=H^i\bigl(\PP^2,(\pi_*\cG)(t)\bigr)\qquad(i\ge0,\ t\in\ZZ),
\end{equation}
by the projection formula and the vanishing of $R^i\pi_*$ for $i>0$. We write $h$ for the class of a line in $\PP^2$, $\mu_H(\cG)=c_1(\cG)\cdot H/\rk\cG$ and $\mu_h(\cF)=c_1(\cF)\cdot h/\rk\cF$.

\begin{lemma}\label{lem:slope}
For a coherent sheaf $\cG$ of rank $\rho>0$ on $Y$,
\[
c_1(\pi_*\cG)\cdot h=c_1(\cG)\cdot H-\rho\,\frac{k+3d}{2},\qquad\text{i.e.}\qquad
\mu_h(\pi_*\cG)=\frac{\mu_H(\cG)-k/2}{d}-\frac32 .
\]
\end{lemma}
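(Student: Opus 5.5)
The plan is to compute $c_1(\pi_*\cG)$ with Grothendieck--Riemann--Roch for the finite flat morphism $\pi\colon Y\to\PP^2$, or, more elementarily, by comparing Euler characteristics through \eqref{eq:proj}. The Euler-characteristic route avoids any discussion of Todd classes of the map. Since $\pi$ is finite, $R^i\pi_*=0$ for $i>0$, so \eqref{eq:proj} gives $\chi(Y,\cG(t))=\chi(\PP^2,(\pi_*\cG)(t))$ for all $t\in\ZZ$. Both sides are polynomials in $t$, and I will compare the coefficients of $t$.

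On $\PP^2$, for a coherent sheaf $\cF$ of rank $\rho d$ with $c_1(\cF)=a\,h$, Riemann--Roch gives
\[
\chi(\cF(t))=\rho d\,\frac{t^2}{2}+\Bigl(a+\frac{3\rho d}{2}\Bigr)t+\text{const}.
\]
For $\cF=\pi_*\cG$ the rank is $\rho d$, since $\pi$ is flat of degree $d$ (rank is read off at the generic point). On $Y$, Riemann--Roch for a coherent sheaf of rank $\rho$ gives
\[
\chi(\cG(t))=\rho\,\frac{d\,t^2}{2}+\Bigl(c_1(\cG)\cdot H-\rho\,\frac{k}{2}\Bigr)t+\text{const}.
\]
Here the linear coefficient comes from $\chi(\cG(t))=\int\operatorname{ch}(\cG)e^{tH}\operatorname{td}(Y)$ with $\operatorname{td}_1(Y)=-K_Y/2$. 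For a possibly non-locally-free $\cG$, I would justify this via a finite locally free resolution on the smooth surface $Y$, using additivity of both sides. The leading coefficients agree, which is consistent with $\deg\pi=d$.

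Equating the linear coefficients gives $a+\tfrac{3\rho d}{2}=c_1(\cG)\cdot H-\rho k/2$. Since $a=c_1(\pi_*\cG)\cdot h$, this is exactly the first identity. Dividing by $\rk\pi_*\cG=\rho d$ and using $\mu_H(\cG)=c_1(\cG)\cdot H/\rho$ yields $\mu_h(\pi_*\cG)=\bigl(\mu_H(\cG)-k/2\bigr)/d-3/2$.

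The only point needing care is that equality of the two Euler characteristics for all integers $t$ forces equality of the polynomials; this holds because both are polynomials agreeing on infinitely many values. The other point is the rank of $\pi_*\cG$ when $\cG$ has torsion, where the torsion contributes to $c_1$ on both sides compatibly. This is automatic in the Euler-characteristic argument, since it never decomposes $\cG$. I expect no real obstacle: the main step is simply bookkeeping the Riemann--Roch linear terms correctly on both sides.
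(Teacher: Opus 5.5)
Your proposal is correct and is essentially the paper's own proof: both compute the Riemann--Roch polynomials of $\cG(t)$ on $Y$ and of $(\pi_*\cG)(t)$ on $\PP^2$, identify them via \eqref{eq:proj}, and compare the coefficients of $t$. Your linear coefficients, $c_1(\cG)\cdot H-\rho k/2$ and $c_1(\pi_*\cG)\cdot h+\tfrac32\rho d$, agree with the paper's, and your remarks on torsion and on the rank $\rho d$ are harmless extra bookkeeping.
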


\begin{proof}
Riemann--Roch on $Y$ gives $\chi(\cG(t))=\chi(\cG)+t\,(c_1(\cG)\cdot H-\rho k/2)+t^2\rho d/2$, and on $\PP^2$, $\chi((\pi_*\cG)(t))=\rho d\binom{t+2}{2}+c_1(\pi_*\cG)\cdot h\,(t+\tfrac32)+\mathrm{const}$. The two polynomials coincide by \eqref{eq:proj}; compare the coefficients of $t$.
\end{proof}

\begin{lemma}[Horrocks]\label{lem:horrocks}
A vector bundle $\cE$ on $Y$ is aCM if and only if $\pi_*\cE\cong\bigoplus_{j=1}^{N}\cO_{\PP^2}(a_j)$ for some integers $a_j$, $N=\rk(\cE)\,d$.
\end{lemma}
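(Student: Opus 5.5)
The plan is to reduce both directions to Horrocks' splitting criterion on $\PP^2$, using the cohomology comparison \eqref{eq:proj} for the finite flat map $\pi$. As noted above, $\pi_*\cE$ is locally free of rank $\rk(\cE)\,d$ because $\pi$ is finite and flat.

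Suppose first that $\cE$ is aCM. Apply \eqref{eq:proj} with $\cG=\cE$ and $i=1$: then $H^1(\PP^2,(\pi_*\cE)(t))=H^1(Y,\cE(t))=0$ for all $t\in\ZZ$. On $\PP^2$ the only intermediate cohomology is $H^1$. Horrocks' criterion says that a vector bundle $\cF$ on $\PP^n$ with $H^i(\PP^n,\cF(t))=0$ for $0<i<n$ and all $t$ splits as a direct sum of line bundles. Hence $\pi_*\cE\cong\bigoplus_j\cO_{\PP^2}(a_j)$, with $N=\rk(\cE)\,d$ summands, which is the rank computed above.

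Conversely, suppose $\pi_*\cE\cong\bigoplus_j\cO_{\PP^2}(a_j)$. Then $H^1(\PP^2,\cO_{\PP^2}(a_j+t))=0$ for all $j$ and $t$, so by \eqref{eq:proj} again $H^1(Y,\cE(t))=0$ for every $t$, and $\cE$ is aCM.

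No step is a real obstacle. The one input to check is \eqref{eq:proj} for twists. By the projection formula and $\pi^*\cO_{\PP^2}(1)=\cO_Y(H)$ we have $\pi_*(\cE(t))=(\pi_*\cE)(t)$, and since $\pi$ is affine the higher direct images vanish. Both facts were recorded when \eqref{eq:proj} was stated. If one wants a self-contained proof of Horrocks' criterion on $\PP^2$, the standard route is induction via restriction to a line together with Grothendieck's splitting on $\PP^1$. That argument is not needed here, since the result is classical.
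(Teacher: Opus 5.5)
Your proposal is correct and follows essentially the same route as the paper: via \eqref{eq:proj}, $\cE$ is aCM exactly when $\pi_*\cE$ has no intermediate cohomology on $\PP^2$, and Horrocks' criterion then splits it. The converse comes from $H^1(\PP^2,\cO_{\PP^2}(a))=0$, and the count $N=\rk(\cE)\,d$ comes from $\pi$ being finite and flat of degree $d$.
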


\begin{proof}
By \eqref{eq:proj}, $\cE$ is aCM iff $H^1_*(\pi_*\cE)=0$, and a vector bundle on $\PP^2$ without intermediate cohomology splits \cite{Ho}.
\end{proof}

\begin{remark}[torsion-free sheaves]\label{rem:tf}
A torsion-free sheaf $\cE$ on $Y$ with $H^1(Y,\cE(t))=0$ for all $t\in\ZZ$ is locally free: in $0\to\cE\to\cE^{\vee\vee}\to\cQ\to0$ the sheaf $\cQ$ has finite length, and $H^0(Y,\cE^{\vee\vee}(t))\to H^0(Y,\cQ(t))$ is surjective for every $t$, so $\cQ=0$ because $H^0(Y,\cE^{\vee\vee}(t))=0$ for $t\ll0$; thus $\cE$ is reflexive, hence locally free on the smooth surface $Y$. Theorem~\ref{thm:A} therefore holds verbatim for torsion-free sheaves, and the bundles of Lemma~\ref{lem:mcm} below could equally be described as torsion-free sheaves.
\end{remark}

\subsection*{A Riemann--Roch identity}
For a torsion-free sheaf $\cF$ of rank $\rho$ on $Y$ put $\nu=c_1(\cF)/\rho\in\NS(Y)_\QQ$, $\mu=\nu\cdot H$, $\delta=\Delta(\cF)/\rho^2$ with $\Delta(\cF)=2\rho c_2(\cF)-(\rho-1)c_1(\cF)^2$, $\beta=\nu-K_Y/2$, and $\alpha=\beta-\frac{\beta\cdot H}{d}H$. Then $\alpha\cdot H=0$, so $\alpha^2\le0$ by the Hodge index theorem, and Riemann--Roch reads
\begin{equation}\label{eq:rr}
\frac{\chi(\cF(t))}{\rho}=\frac d2\Bigl(t+\frac{\mu-k/2}{d}\Bigr)^2-\frac{\sigma}{8}+\frac{\alpha^2-\delta}{2}\qquad(t\in\ZZ).
\end{equation}
Indeed $\chi(\cF(t))/\rho=\chi+\tfrac12\nu\cdot(\nu-K_Y)-\tfrac\delta2+t(\mu-k/2)+t^2d/2$, and $\nu\cdot(\nu-K_Y)=\beta^2-K_Y^2/4$ with $\beta^2=\alpha^2+(\mu-k/2)^2/d$. If $\cF$ is $\mu_H$-semistable, then $\delta\ge0$ by Bogomolov's inequality for torsion-free sheaves in characteristic $0$ \cite[Thm.~3.4.1]{HL}, and \eqref{eq:rr} gives
\begin{equation}\label{eq:ss}
\frac{\chi(\cF(t))}{\rho}\le\frac d2\Bigl(t+\frac{\mu-k/2}{d}\Bigr)^2-\frac{\sigma}{8}.
\end{equation}

\subsection*{Majorization}

\begin{lemma}\label{lem:maj}
Let $W=\bigoplus_{j=1}^N\cO_{\PP^2}(a_j)$ with $a_1\ge\dots\ge a_N$, and let $0=W_0\subset W_1\subset\dots\subset W_s=W$ be a filtration by subsheaves whose quotients $V_i=W_i/W_{i-1}$ have ranks $n_i>0$ and slopes $b_i=\mu_h(V_i)$, where $b_1\ge b_2\ge\dots\ge b_s$. Then, for every real number $c$,
\[
\sum_{i=1}^s\frac{n_i}{N}\,(b_i+c)^2\;\le\;\frac1N\sum_{j=1}^N(a_j+c)^2 .
\]
\end{lemma}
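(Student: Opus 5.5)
The plan is to show that the vector of slopes, with multiplicities, is majorized by the splitting type, and then apply Karamata's inequality (Hardy--Littlewood--P\'olya) to the convex function $f(x)=(x+c)^2$. Let $x=(x_1,\dots,x_N)$ be the real vector in which $b_i$ is repeated $n_i$ times, listed in the order $i=1,\dots,s$. It is nonincreasing because $b_1\ge\dots\ge b_s$, and $\sum_i n_i(b_i+c)^2=\sum_j f(x_j)$. Put $r_i=\rk W_i=n_1+\dots+n_i$, so that $r_s=N$. Karamata's inequality gives $\sum_j f(x_j)\le\sum_j f(a_j)$ once we know $x\prec a$, that is,
\[
\sum_{j\le k}x_j\le\sum_{j\le k}a_j\quad(1\le k\le N),\qquad \sum_{j\le N}x_j=\sum_{j\le N}a_j .
\]
Dividing by $N$ then gives the lemma.

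The equality of totals follows from additivity of $c_1$ in the filtration: $\sum_i n_ib_i=\sum_i c_1(V_i)\cdot h=c_1(W)\cdot h=\sum_j a_j$. The key step is the inequality at the break points $k=r_i$, namely
\[
c_1(W_i)\cdot h=\sum_{l\le i}n_lb_l\le a_1+\dots+a_{r_i}.
\]
To prove it, I take the top exterior power of the inclusion $W_i\hookrightarrow W$, which is generically injective between sheaves of rank $r_i$ and $N$. This yields a nonzero map $\det W_i=\cO_{\PP^2}(c_1(W_i))\to\bigwedge^{r_i}W=\bigoplus_{|J|=r_i}\cO_{\PP^2}\bigl(\sum_{j\in J}a_j\bigr)$; here $\det W_i$ is the double dual of $\bigwedge^{r_i}W_i$, and the map extends across codimension $2$ because the target is reflexive. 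Some component of this map is a nonzero map $\cO(e)\to\cO(e')$, which forces $e\le e'$. Hence $c_1(W_i)\cdot h\le\max_{|J|=r_i}\sum_{j\in J}a_j=a_1+\dots+a_{r_i}$, using that the $a_j$ are sorted.

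For intermediate $k$ with $r_{i-1}<k<r_i$, let $P(k)=\sum_{j\le k}x_j$ and $Q(k)=\sum_{j\le k}a_j$. The function $P$ is linear in $k$ on $[r_{i-1},r_i]$ with slope $b_i$, while $Q$ is concave in $k$ because the $a_j$ are nonincreasing. The inequality $P\le Q$ holds at both endpoints, so it holds in between, since a linear function lying below a concave function at two points lies below it on the whole segment.

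The only point requiring care is the determinant comparison. The subsheaves $W_i$ need not be saturated or locally free, so one must use $c_1$ of a torsion-free sheaf via its reflexive determinant; on $\PP^2$ this is harmless. The remaining steps are formal.
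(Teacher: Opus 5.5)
Your proposal is correct and follows essentially the same route as the paper: the top exterior power of $W_i\hookrightarrow W$ bounds $c_1(W_i)\cdot h$ by $a_1+\dots+a_{r_i}$ at the break points. Between break points you compare a linear function with a concave one, and you finish with Karamata's inequality for $y\mapsto(y+c)^2$.
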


\begin{proof}
Let $T\subset W$ be a subsheaf of rank $p$, and let $\det T:=(\bigwedge^pT)^{\vee\vee}$; it is an invertible sheaf with $c_1(\det T)=c_1(T)$, and it coincides with $\bigwedge^pT$ on the open set $U$ on which $T$ is locally free, whose complement is finite. Since $T\to W$ is generically an injection of vector bundles, $\bigwedge^pT\to\bigwedge^pW=\bigoplus\cO(a_{j_1}+\dots+a_{j_p})$ is nonzero, so some component $\det T|_U\to\cO(a_J)|_U$, $a_J=a_{j_1}+\dots+a_{j_p}$, is nonzero; it extends over $\PP^2$ because a homomorphism between invertible sheaves defined outside a finite subset of the smooth surface $\PP^2$ extends, whence $\deg T:=c_1(T)\cdot h\le a_J\le a_1+\dots+a_p$. Let $x_1\ge\dots\ge x_N$ be the sequence in which each $b_i$ is repeated $n_i$ times, and $\Sigma_x(p)=\sum_{j\le p}x_j$, $\Sigma_a(p)=\sum_{j\le p}a_j$. At $p=n_1+\dots+n_i$ we have $\Sigma_x(p)=\deg W_i\le\Sigma_a(p)$; between two such values $\Sigma_x$ is affine and $\Sigma_a$ is concave, hence $\Sigma_x(p)\le\Sigma_a(p)$ for all $p$, with $\Sigma_x(N)=\Sigma_a(N)=\deg W$. Thus $x$ is majorized by $a$, and Karamata's inequality for the convex function $y\mapsto(y+c)^2$ gives $\sum_j(x_j+c)^2\le\sum_j(a_j+c)^2$, which is the claim. (Explicitly, with $S_p=\Sigma_a(p)-\Sigma_x(p)\ge0$, $S_N=0$: $\sum_j\bigl[(a_j+c)^2-(x_j+c)^2\bigr]=\sum_{p<N}S_p\bigl[(a_p+x_p)-(a_{p+1}+x_{p+1})\bigr]\ge0$.) The inequality $\Sigma_x\le\Sigma_a$ is a special case of Shatz's theorem that the Harder--Narasimhan polygon of a sheaf lies above the polygon of any filtration by subsheaves \cite{Sh}, the Harder--Narasimhan polygon of $W$ being given by the $a_j$; the passage from majorization to the inequality of sums is the Hardy--Littlewood--P\'olya--Karamata inequality \cite{HLP,Ka}. The lemma is thus a restatement of \cite[Lem.~1.5]{Lan}, where the same inequality is proved for two convex polygons with common endpoints, one contained in the other, and is used in the same structural place, to transport an estimate from one Harder--Narasimhan polygon to another; the shift by $c$ is immaterial, the polygons sharing their endpoints, so that $\sum_jx_j=\sum_ja_j$ and the terms linear in $c$ cancel.
\end{proof}

\begin{remark}[what the majorization step replaces]\label{rem:whymaj}
The push-forward $\pi_*$ does not preserve $\mu$-semistability, so the filtration $\pi_*\cE_\bullet$ of $W=\pi_*\cE$ used below is in general \emph{not} the Harder--Narasimhan filtration of $W$, and its slopes $b_1>\dots>b_s$ bear no a priori relation to the splitting type $a_1\ge\dots\ge a_N$ of $W$. Lemma~\ref{lem:maj} supplies the only relation that is needed, and it uses nothing about the filtration beyond three facts: that each $\pi_*\cE_i$ is a subsheaf of $W$, that its rank is $d\cdot\rk\cE_i$, and that the slopes $b_i$ are decreasing. The last is essential, and is not a formal consequence of the other two: on $\PP^2$, for two forms of degree $M>0$ without common factor and with intersection $Z$, the extension $0\to\cO(-M)\to\cO^{\oplus2}\to I_Z(M)\to0$ filters $W=\cO^{\oplus2}$, of splitting type $(0,0)$, by subsheaves with rank-one quotients of slopes $(-M,M)$, and $\tfrac12\bigl((-M)^2+M^2\bigr)=M^2>0$ violates the conclusion of the lemma. Here the monotonicity is supplied by Lemma~\ref{lem:slope}, the map $\mu\mapsto(\mu-k/2)/d-\tfrac32$ being strictly increasing, so that $\mu_1>\dots>\mu_s$ forces $b_1>\dots>b_s$. With it, the resulting inequality between the second moments of the two sequences is exactly what the Riemann--Roch comparison of the next subsection consumes. No semistability statement about $W$, and no compatibility between the two filtrations, is claimed or used.

It also explains why the conclusion consumes the exact value of $\chi(\cE)$ and not merely the inequality $\chi(\cE(t))=h^0+h^2\ge0$, which is what suffices for line bundles and for semistable bundles (Remark~\ref{rem:rr}). There, one evaluates at the integer nearest the vertex of a single parabola; here the parabolas attached to the several Harder--Narasimhan factors have different vertices, and $\chi(\cE(t))\ge0$ constrains only their sum. Summing \eqref{eq:ss} at a common $t=0$ gives
\[
0\;\le\;\frac{\chi(\cE)}{r}\;\le\;\frac d2\Bigl(v_b+\bigl(\bar a+\tfrac32\bigr)^2\Bigr)-\frac\sigma8
\]
in the notation of Proposition~\ref{prop:identity}, and neither the weighted variance $v_b$ of the slopes $b_i$ nor their mean $\bar a$ is controlled by $d$ and $\sigma$, so this yields nothing. Computing $\chi(\cE)$ exactly through the Horrocks splitting produces the term $\bigl(\bar a+\tfrac32\bigr)^2$ a second time, with the opposite sign; it cancels, and what remains to be proved is precisely $v_a\ge v_b$, which is Lemma~\ref{lem:maj}.
\end{remark}

\subsection*{Conclusion}
Let $\cE$ be a nonzero aCM bundle of rank $r$, $N=rd$, and $W=\pi_*\cE\cong\bigoplus_{j=1}^N\cO(a_j)$ with $a_1\ge\dots\ge a_N$ (Lemma~\ref{lem:horrocks}). Let $0=\cE_0\subset\cE_1\subset\dots\subset\cE_s=\cE$ be the Harder--Narasimhan filtration with respect to $\mu_H$; its quotients $\cQ_i$ are torsion-free, $\mu_H$-semistable, of ranks $r_i$ and slopes $\mu_1>\dots>\mu_s$. Since $\pi_*$ is exact and preserves torsion-freeness, $W_i:=\pi_*\cE_i$ is a filtration of $W$ -- in general not its Harder--Narasimhan filtration (Remark~\ref{rem:whymaj}) -- with torsion-free quotients $\pi_*\cQ_i$ of ranks $n_i=dr_i$ and, by Lemma~\ref{lem:slope}, slopes
\[
b_i=\frac{\mu_i-k/2}{d}-\frac32,\qquad b_1>\dots>b_s .
\]
Put $w_i=r_i/r=n_i/N$. Additivity of $\chi$, \eqref{eq:ss} at $t=0$ for each $\cQ_i$, and Lemma~\ref{lem:maj} with $c=\tfrac32$ give
\[
\frac{\chi(\cE)}{r}=\sum_iw_i\frac{\chi(\cQ_i)}{r_i}
\le\frac d2\sum_iw_i\bigl(b_i+\tfrac32\bigr)^2-\frac\sigma8
\le\frac{d}{2N}\sum_j\bigl(a_j+\tfrac32\bigr)^2-\frac\sigma8 .
\]
On the other hand, by \eqref{eq:proj} and $\chi(\cO_{\PP^2}(a))=\tfrac12(a+1)(a+2)=\tfrac12\bigl[(a+\tfrac32)^2-\tfrac14\bigr]$,
\[
\frac{\chi(\cE)}{r}=\frac1r\sum_j\chi\bigl(\cO_{\PP^2}(a_j)\bigr)=\frac{d}{2N}\sum_j\bigl(a_j+\tfrac32\bigr)^2-\frac d8 .
\]
Comparing the two displays gives $d\ge\sigma$. Hence if $d<\sigma$ no nonzero aCM bundle exists. Note that of the aCM hypothesis only the splitting of $\pi_*\cE$ has been used, not the nonnegativity of $\chi(\cE(t))$. \qed

\begin{proposition}[the defect identity]\label{prop:identity}
Keeping all terms, the argument proves, for every nonzero aCM bundle $\cE$, with the notation above,
\[
\frac{d-\sigma}{4}=d\,(v_a-v_b)+\sum_iw_i\bigl(\delta(\cQ_i)-\alpha(\cQ_i)^2\bigr),
\qquad v_a=\tfrac1N\textstyle\sum_j(a_j-\bar a)^2,\quad v_b=\sum_iw_i(b_i-\bar a)^2,
\]
where $\bar a=\frac1N\sum_ja_j=\sum_iw_ib_i$, and every term on the right is $\ge0$: $v_a\ge v_b$ is Lemma~\ref{lem:maj} with $c=-\bar a$, $\delta(\cQ_i)\ge0$ is Bogomolov's inequality and $\alpha(\cQ_i)^2\le0$ is the Hodge index theorem. The identity distributes the defect $d-\sigma$ among the gap between the second moments of the two polygons, the discriminants of the Harder--Narasimhan factors, and the squares $\alpha(\cQ_i)^2$ measuring the deviation of their normalized first Chern classes, shifted by $K_Y/2$, from the line spanned by $H$. The term $v_a-v_b$ should not be read as a measure of the instability of $\cE$: it is already positive for $\cE=\cO$ on $(\PP^2,\cO(2))$, where $v_b=0$ and $v_a=\tfrac3{16}$ (Remark~\ref{rem:checks}). The argument needs neither that the $\cQ_i$ be aCM nor that $\pi_*$ preserve semistability.
\end{proposition}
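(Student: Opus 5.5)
The identity comes from rerunning the proof of Theorem~\ref{thm:A} with every inequality replaced by the equality it came from. Then I check that the two sides agree and that each term on the right has the claimed sign. The plan has three steps: compute $\chi(\cE)/r$ twice, once through the Harder--Narasimhan factors and once through the Horrocks splitting; subtract; and rewrite the second moments as variances about $\bar a$.

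\emph{First expression.} Apply \eqref{eq:rr} at $t=0$ to each $\cQ_i$, with $\rho=r_i$ and $\mu=\mu_i$. Since $(\mu_i-k/2)/d=b_i+\tfrac32$ by Lemma~\ref{lem:slope}, this gives
\[
\frac{\chi(\cQ_i)}{r_i}=\frac d2\bigl(b_i+\tfrac32\bigr)^2-\frac\sigma8+\frac{\alpha(\cQ_i)^2-\delta(\cQ_i)}2 .
\]
Additivity of $\chi$ along the filtration then yields
\[
\frac{\chi(\cE)}r=\frac d2\sum_iw_i\bigl(b_i+\tfrac32\bigr)^2-\frac\sigma8-\frac12\sum_iw_i\bigl(\delta(\cQ_i)-\alpha(\cQ_i)^2\bigr).
\]

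\emph{Second expression.} The Horrocks computation at the end of Section~\ref{sec:proofA} gives exactly
\[
\frac{\chi(\cE)}r=\frac d{2N}\sum_j\bigl(a_j+\tfrac32\bigr)^2-\frac d8 .
\]

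\emph{Comparison.} Equating the two expressions and multiplying by $2$ gives
\[
\frac{d-\sigma}4=d\Bigl[\frac1N\sum_j\bigl(a_j+\tfrac32\bigr)^2-\sum_iw_i\bigl(b_i+\tfrac32\bigr)^2\Bigr]+\sum_iw_i\bigl(\delta(\cQ_i)-\alpha(\cQ_i)^2\bigr).
\]
The step that needs care is the equality of means, $\bar a=\sum_iw_ib_i$. This holds because $W_i=\pi_*\cE_i$ is a filtration of $W$ with $\sum_i n_ib_i=c_1(W)\cdot h=\sum_ja_j$, and $w_i=n_i/N$. Writing $a_j+\tfrac32=(a_j-\bar a)+(\bar a+\tfrac32)$, and similarly for $b_i$, the cross terms vanish because both distributions have mean $\bar a$ with total weight $1$. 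The terms $(\bar a+\tfrac32)^2$ then cancel, and the bracket becomes $v_a-v_b$, which is the stated identity.

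\emph{Signs.} Lemma~\ref{lem:maj} with $c=-\bar a$ gives $v_b\le v_a$; it applies because $b_1>\dots>b_s$, as noted in the proof of the theorem. Each $\cQ_i$ is $\mu_H$-semistable and torsion-free, so Bogomolov's inequality gives $\delta(\cQ_i)\ge0$. The Hodge index theorem gives $\alpha(\cQ_i)^2\le0$, since $\alpha\cdot H=0$ by construction.

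For the example, I would verify $\cE=\cO$ on $(\PP^2,\cO(2))$ directly. Here $d=4$, and $\pi_*\cO=\cO\oplus\cO(-1)^{3}$, so $\bar a=-\tfrac34$ and $v_a=\tfrac14\bigl((\tfrac34)^2+3(\tfrac14)^2\bigr)=\tfrac3{16}$. There is a single factor, so $v_b=0$. Both sides of the identity equal $\tfrac34$, because $\sigma=1$ and the factor terms vanish. The final remark in the statement, that neither aCM-ness of the $\cQ_i$ nor preservation of semistability by $\pi_*$ is needed, is then evident: the only inputs used are \eqref{eq:rr} for each $\cQ_i$, the splitting of $W$, and Lemma~\ref{lem:maj}.
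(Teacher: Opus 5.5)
Your proposal is correct and follows the paper's own route. You rerun the conclusion of the proof of Theorem~\ref{thm:A} with \eqref{eq:rr} in place of \eqref{eq:ss}, equate the result with the exact Horrocks computation of $\chi(\cE)/r$, and use $\sum_iw_ib_i=\bar a$ to cancel the two $(\bar a+\tfrac32)^2$ terms, as sketched in Remark~\ref{rem:whymaj}; your sign checks and your verification on $(\PP^2,\cO(2))$ are also right.
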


\begin{remark}[numerical checks]\label{rem:checks}
As a check of the signs, the identity of Proposition~\ref{prop:identity} was evaluated on unstable aCM bundles: $\cO\oplus\cO(1)$ on a smooth quartic surface in $\PP^3$, where $\pi_*\cO=\bigoplus_{i=0}^3\cO(-i)$, $a=(1,0,0,-1,-1,-2,-2,-3)$, $v_a=\tfrac32$, $b=(-\tfrac12,-\tfrac32)$, $v_b=\tfrac14$ ($5=5$); $\cO(1,0)\oplus\cO$ on $\PP^1\times\PP^1$ with $H=\cO(1,1)$, where $\pi_*\cO(1,0)=\cO^2$, $\pi_*\cO=\cO\oplus\cO(-1)$ and $\alpha(\cO(1,0))^2=-\tfrac12$ ($\tfrac12=\tfrac14+\tfrac14$); and $\cO(\Theta)\oplus\cO(2\Theta)$ on a principally polarized abelian surface with $H=3\Theta$, an irregular surface with $K=0$, $\sigma=0$, $d=18$ (the factor $3$ in $H=3\Theta$ is what makes $\cE$ aCM: $H^1(\cO(u\Theta))\ne0$ only for $u=0$, and $m+3t\ne0$ for $m=1,2$ and $t\in\ZZ$): here $h^0(\cO(m\Theta))=m^2$ and $H^i(\cO(m\Theta))=0$ for $i>0$, $m>0$, so the splitting types follow from $h^0(\pi_*\cO(\Theta)(t))=(3t+1)^2$ and $h^0(\pi_*\cO(2\Theta)(t))=(3t+2)^2$ for $t\ge0$ (and $0$ for $t\le-1$), namely $\pi_*\cO(\Theta)=\cO\oplus\cO(-1)^{13}\oplus\cO(-2)^4$ and $\pi_*\cO(2\Theta)=\cO^4\oplus\cO(-1)^{13}\oplus\cO(-2)$, whence $\bar a=-1$, $v_a=\tfrac{10}{36}=\tfrac5{18}$, $b=(-\tfrac56,-\tfrac76)$, $v_b=\tfrac1{36}$ and $\alpha=0$ ($\tfrac92=\tfrac92$). A semistable check with $d\ne1$: $\cO$ on $(\PP^2,\cO(2))$, where $\pi_*\cO=\cO\oplus\cO(-1)^3$, $v_a=\tfrac3{16}$, $v_b=\delta=\alpha=0$ and $\tfrac{4-1}4=4\cdot\tfrac3{16}$.
\end{remark}

\begin{remark}[semistable bundles and line bundles]\label{rem:rr}
For every aCM bundle $\cE$, semistable or not, evaluating the identity \eqref{eq:rr} at the integer nearest to the vertex and using $\chi(\cE(t))=h^0+h^2\ge0$ gives $\delta(\cE)\le\tfrac{d-\sigma}4+\alpha(\cE)^2\le\tfrac{d-\sigma}4$, i.e.\ $\Delta(\cE)\le\tfrac{r^2}{4}(d-\sigma)$. If $\cE$ is $\mu_H$-semistable this contradicts Bogomolov's inequality when $d<\sigma$, and the Harder--Narasimhan filtration is not needed: semistability enters only through Bogomolov's inequality, and Section~\ref{sec:proofA} adds the treatment of unstable bundles. For a line bundle $\delta=0$, and this is the elementary observation that, when $\sigma>d$ (so that $\sigma>0$), $\chi(\cE(t))$ is negative on an open interval of length $\sqrt{(\sigma-4\alpha^2)/d}\ge\sqrt{\sigma/d}>1$, which therefore contains an integer. Nothing in this paragraph uses the base-point-freeness of $H$: \eqref{eq:rr}, the Hodge index theorem and Bogomolov's inequality require only that $H$ be ample, so for $H$ merely ample and $\cE$ semistable of any rank one still obtains $d\ge\sigma$. Base-point-freeness is used only for bundles carrying no semistability assumption, through the finite projection to $\PP^2$; for $H$ merely ample the argument applied to a base-point-free multiple $mH$ gives only $m^2d\ge\sigma$ in rank $\ge2$, and whether $d\ge\sigma$ holds for ample $H$ in rank $\ge2$ \emph{without} a semistability assumption is not addressed here. The semistable case is thus the Bogomolov obstruction, used in \cite{An} for Ulrich bundles; the content of Theorem~\ref{thm:A} is the unstable case, where the Harder--Narasimhan filtration of $\cE$ is compared with the Horrocks splitting of $\pi_*\cE$ through the majorization Lemma~\ref{lem:maj}.
\end{remark}

\section{Proof of Theorem \ref{thm:B}}\label{sec:proofB}

Recall from Section~\ref{sec:statements} that $S$ denotes either the section ring $R=R(Y,H)$ or, when $H$ is very ample, the homogeneous coordinate ring $A=A(Y,H)$, and $\mathfrak m=S_+$. In both cases
\begin{equation}\label{eq:cover}
\sqrt{S_1S}=\mathfrak m :
\end{equation}
for $R$ this is Lemma~\ref{lem:ring}(i), and for $A$ it is clear, $A_+=A_1A$ being generated in degree $1$. Lemma~\ref{lem:ring}(ii) is stated for $R$, but holds verbatim for $A$: by Serre vanishing on $\PP^{h_0-1}$ one has $A_t=H^0(Y,\cO_Y(tH))=R_t$ for $t\gg0$, so the graded module $R/A$ has finite length and is killed by a power of any $s\in A_1=R_1$; hence $A_s=R_s$, and the charts $D_+(s)$ of $\Proj A$ coincide with those of $\Proj R$, together with their twisted sheaves. By \eqref{eq:cover} and Lemma~\ref{lem:ring}(ii), $Y=\Proj S$ is covered by the affine opens $D_+(s)$, $s\in S_1$, on which $S_s\cong(S_s)_0[s,s^{-1}]$ and $(M_s)_t=s^t(M_s)_0$ for every graded $S$-module $M$; hence $\cO_{\Proj S}(t)=\cO_{\Proj S}(1)^{\otimes t}=\cO_Y(tH)$ for all $t\in\ZZ$ and $\widetilde{M(t)}=\widetilde M\otimes\cO_Y(tH)$.

\begin{remark}[a warning]\label{rem:warning}
These properties do not follow from the weaker requirements that $\Proj S=Y$, $\cO_{\Proj S}(1)\cong\cO_Y(H)$ and $S_1$ generate $\cO_Y(H)$, and Lemma~\ref{lem:mcm} below fails without them: for $S=\CC[u,v,w]^{(3)}$ with $\deg u=1$, $\deg v=\deg w=2$, one has $\Proj S=\PP^2$, $\cO_{\Proj S}(1)=\cO_{\PP^2}(1)$ generated by $S_1=u\langle u^2,v,w\rangle$, but $S_1\subset(u)\cap S\subsetneq S_+$, $\cO_{\Proj S}(2)=\cO_{\PP^2}(3)$, and $S$, which is free over $\CC[u^6,v^3,w^3]$, is a graded maximal Cohen--Macaulay module over itself with $\dim S_2=10\ne6=h^0(\cO_{\PP^2}(2))$. The failure is therefore explicit: $\widetilde S=\cO_{\PP^2}$ is an aCM bundle, but already as graded vector spaces $S\not\cong\bigoplus_{t\in\ZZ}H^0(\PP^2,\cO_{\PP^2}(t))$. Without the identity $\cO_{\Proj S}(t)=\cO_{\Proj S}(1)^{\otimes t}$ of Lemma~\ref{lem:ring}(ii), which fails here at $t=2$, this sum formed with ordinary tensor powers does not automatically carry the graded $S$-module structure used in Lemma~\ref{lem:mcm}; the two constructions cannot be identified as in that lemma. Condition \eqref{eq:cover} says that the punctured spectrum is covered by the $\mathbb G_m$-stable affine charts $D(s)=\Spec(S_s)_0[s,s^{-1}]$, $s\in S_1$, on which the action is trivialized; for the ring above this fails along the line $u=0$.
\end{remark}

The correspondence that follows is classical when $H$ is very ample. The argument given here is valid in any characteristic and uses \eqref{eq:cover}, which by Remark~\ref{rem:warning} cannot simply be dropped. It should not be confused with \cite[Cor.~4.5(i)]{ShTa}, which, for an ample $\QQ$-divisor $D$ on a normal projective variety $X$ of dimension $\ge3$ over an $F$-finite field of characteristic $p$, with $\cO_X(\lfloor iD\rfloor)$ invertible for all $i$, characterizes the existence of a graded maximal Cohen--Macaulay module over $R(X,D)$ by that of a coherent sheaf on $X$ whose stalks are maximal Cohen--Macaulay. The latter is a purely local condition, satisfied by $\cO_X$ on every smooth $X$, and strictly weaker than being aCM. Of the two implications, the one going from a graded maximal Cohen--Macaulay module to such a sheaf is characteristic-free; the converse is proved there by passing to Veronese direct summands of Frobenius push-forwards \cite[Prop.~4.1]{ShTa}, while the characteristic-free part of that argument \cite[Lem.~4.4]{ShTa} yields only a generalized Cohen--Macaulay module. It is that converse which fails in characteristic zero: Corollary~\ref{cor:shta} below exhibits pairs $(X,D)$ satisfying every hypothesis of \cite[Cor.~4.5(i)]{ShTa} once its positive-characteristic, $F$-finite base field is replaced by an algebraically closed field of characteristic zero -- the $F$-finiteness being part of that hypothesis and meaningless without it -- and for which it is false.

\begin{lemma}\label{lem:mcm}
$M\mapsto\widetilde M$ and $\cE\mapsto\Gamma_*(\cE)=\bigoplus_{t\in\ZZ}H^0(Y,\cE(t))$ induce mutually inverse bijections, up to isomorphism, between graded MCM $S$-modules and aCM vector bundles of positive rank on $(Y,H)$.
\end{lemma}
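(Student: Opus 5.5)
The plan is to use the standard comparison between local cohomology at $\mathfrak m$ and sheaf cohomology on $Y=\Proj S$, valid here because of \eqref{eq:cover}. For a finitely generated graded $S$-module $M$ there is an exact sequence
\[
0\to H^0_{\mathfrak m}(M)\to M\to\Gamma_*(\widetilde M)\to H^1_{\mathfrak m}(M)\to0,
\]
together with isomorphisms $H^{i+1}_{\mathfrak m}(M)\cong\bigoplus_t H^i(Y,\widetilde M(t))$ for $i\ge1$. To obtain these, I compute $H^\bullet_{\mathfrak m}$ with the \v{C}ech complex on generators $s_1,\dots,s_{h_0}$ of $S_1$; by \eqref{eq:cover} this is legitimate, since $\sqrt{S_1S}=\mathfrak m$. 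The degree-$t$ part of the localized terms, $(M_{s_{i_0}\cdots s_{i_p}})_t$, equals $\Gamma(Y_{s_{i_0}\cdots s_{i_p}},\widetilde M(t))$ by Lemma~\ref{lem:ring}(ii) and the identity $\widetilde{M(t)}=\widetilde M\otimes\cO_Y(tH)$, which holds for both $R$ and $A$ as noted before Remark~\ref{rem:warning}. Since the $Y_s$ are affine and cover $Y$, the positive-degree part of the \v{C}ech complex is the \v{C}ech complex of $\bigoplus_t\widetilde M(t)$ for an affine cover, and this yields the sequence and the isomorphisms above.

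\emph{From modules to bundles.} Let $M$ be graded MCM. Then $H^0_{\mathfrak m}(M)=H^1_{\mathfrak m}(M)=0$, so $M\cong\Gamma_*(\widetilde M)$, and $H^2_{\mathfrak m}(M)=0$ gives $H^1(Y,\widetilde M(t))=0$ for all $t$. It remains to show that $\widetilde M$ is locally free of positive rank.

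\begin{itemize}
\item For local freeness, I use that on the chart $\Spec S_s=Y_s\times\mathbb G_m$ the module $M_s=(M_s)_0\otimes(S_s)_0[s,s^{-1}]$. Depth is preserved by localization at primes of $U$, so $M_\mathfrak p$ is MCM over the regular local rings of $U$ (Lemma~\ref{lem:ring}(iii); for $A$ the punctured spectrum is the same because $A_s=R_s$). Hence $M_\mathfrak p$ is free, and $(M_s)_0$ is projective over $(S_s)_0$ by faithfully flat descent along $Y_s\times\mathbb G_m\to Y_s$.
\item For positive rank: $M\neq0$ and $M=\Gamma_*(\widetilde M)$ force $\widetilde M\ne0$. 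Since $\widetilde M$ is locally free on the connected surface $Y$, it has constant positive rank.
\end{itemize}

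\emph{From bundles to modules.} Conversely, let $\cE$ be aCM of rank $r>0$. Finite generation of $\Gamma_*(\cE)$ is the main point I expect to need care. Vanishing for $t\ll0$ holds since $H^0(\cE(t))=0$ for $t\ll0$. For generation, I would use the finite flat $\pi$ and Lemma~\ref{lem:horrocks}: $\Gamma_*(\cE)=\Gamma_*(\pi_*\cE)=\bigoplus_j\bk[x_0,x_1,x_2](a_j)$ is finite over the polynomial ring $\bk[x_0,x_1,x_2]\subset\bk[S_1]\subseteq S$ generated by the three linear forms defining $\pi$. Hence $\Gamma_*(\cE)$ is a finitely generated $S$-module, and in fact free over a Noether normalization.

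From this freeness the depth follows directly: the $x_i$ form a regular sequence on $\Gamma_*(\cE)$, and $\sqrt{(x_0,x_1,x_2)S}=\mathfrak m$ because $S$ is finite over $\bk[x_0,x_1,x_2]$. Therefore $\depth_{\mathfrak m}\Gamma_*(\cE)=3$, which avoids any further local-cohomology computation. Finally, $\widetilde{\Gamma_*(\cE)}=\cE$ by the standard argument on the charts, namely Lemma~\ref{lem:ring}(ii): $\Gamma(Y_s,\cF)=(\Gamma_*(\cF)_s)_0$. Together with $M\cong\Gamma_*(\widetilde M)$, this shows the two constructions are mutually inverse.

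The only delicate step is justifying the \v{C}ech identification via \eqref{eq:cover}. Remark~\ref{rem:warning} shows this is exactly where the hypothesis is needed, and everything else is formal.
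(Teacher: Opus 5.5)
Your proof is correct. The forward direction is essentially the paper's; the converse takes a genuinely different route.

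\textbf{Forward direction.} You use \eqref{eq:lc} exactly as the paper does. Your descent along $(S_s)_0\to(S_s)_0[s,s^{-1}]$ is the same argument as the paper's descent along $\cO_{Y,y}\to S_{\mathfrak p}$.

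One sentence needs repair. Depth is not preserved by localization, and primes of $U$ not contained in $\mathfrak m$ are not localizations of $M_{\mathfrak m}$ at all. What the descent actually needs is only the homogeneous primes $\mathfrak p\subset\mathfrak m$ of closed points of $Y_s$, one over each closed point. For such $\mathfrak p$, the module $M_{\mathfrak p}$ is either $0$ or a localization of the Cohen--Macaulay module $M_{\mathfrak m}$. In the latter case it is Cohen--Macaulay of dimension $\dim M_{\mathfrak m}-\dim M_{\mathfrak m}/\mathfrak pM_{\mathfrak m}=3-1=\dim S_{\mathfrak p}$, hence free over the regular ring $S_{\mathfrak p}$.

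The paper reaches the same point by first proving $\Supp M=\Spec S$. Your positive-rank argument, via $\widetilde M\ne0$ and connectedness of $Y$, replaces that step and is fine.

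\textbf{Converse.} The paper proves finite generation of $\Gamma_*(\cE)$ by Serre vanishing and the Koszul complex on a basis of $R_1$. That argument is valid for every vector bundle. It then reads off $\depth_{\mathfrak m}=3$ from \eqref{eq:lc}, so the aCM hypothesis enters only through $H^2_{\mathfrak m}(M)_t\cong H^1(Y,\cE(t))$.

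You instead feed Lemma~\ref{lem:horrocks} back in. Then $\Gamma_*(\cE)\cong\bigoplus_jP(a_j)$ is graded free of rank $rd$ over $P=\bk[x_0,x_1,x_2]$, which gives finite generation and the regular sequence at once.

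The one point you leave implicit is that $S$ is finite over $P$. The three forms have no common zero on $\varphi_H(Y)$, so $\bk[R_1]$ is finite over $P$ by graded Nakayama, and $R$ is finite over $\bk[R_1]$ by Lemma~\ref{lem:ring}(i). This gives $\sqrt{(x_0,x_1,x_2)S}=\mathfrak m$. Then $H^i_{\mathfrak m}(M)=H^i_{(x_0,x_1,x_2)}(M)$ vanishes for $i<3$ because $M$ is $P$-free.

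\textbf{Comparison.} Your route is shorter and more informative. It shows that graded MCM $S$-modules are exactly the finitely generated graded $S$-modules that are free over the Noether normalization $P$, with the splitting type of Lemma~\ref{lem:horrocks} being their $P$-free structure. The cost is that it rests on Horrocks' theorem and uses the aCM hypothesis already for finite generation. The paper's argument needs only \eqref{eq:lc}, which the forward direction requires anyway. Both versions extend to dimension $m$; yours uses Horrocks on $\PP^m$.
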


\begin{proof}
Let $M$ be a finitely generated graded $S$-module. By \eqref{eq:cover} there are finitely many elements of $S_1$ generating an $\mathfrak m$-primary ideal; computing local cohomology with the \v Cech complex on them, and comparing with the \v Cech complex of $\widetilde M(t)=\widetilde M\otimes\cO_Y(tH)$ on the corresponding cover of $Y$, one gets the exact sequence and isomorphisms (cf.\ \cite[(2.1.5)]{EGA3}, \cite[(5.1.6)]{GoWa}, \cite[20.4.4]{BS})
\begin{equation}\label{eq:lc}
0\to H^0_{\mathfrak m}(M)\to M\to\Gamma_*(\widetilde M)\to H^1_{\mathfrak m}(M)\to0,\qquad
H^{i+1}_{\mathfrak m}(M)_t\cong H^i\bigl(Y,\widetilde M(t)\bigr)\ \ (i\ge1,\ t\in\ZZ).
\end{equation}
Suppose $\depth_{\mathfrak m}M=3$. Then $H^i_{\mathfrak m}(M)=0$ for $i\le2$, so $M\cong\Gamma_*(\widetilde M)$ and $H^1(Y,\widetilde M(t))=0$ for all $t$. As $3=\depth_{\mathfrak m}M\le\dim M\le\dim S=3$ and $S$ is a domain, $\Supp M=\Spec S$. It remains to see that $\widetilde M$ is locally free. Fix a closed point $y\in Y$ and, by \eqref{eq:cover}, $s\in S_1$ with $y\in D_+(s)$. With $B_0:=(S_s)_0$ one has $D_+(s)=\Spec B_0$ and
\[
S_s\cong B_0[T,T^{-1}]\ (T\mapsto s),\qquad M_s\cong(M_s)_0\otimes_{B_0}B_0[T,T^{-1}].
\]
Let $\mathfrak q\subset B_0$ be the ideal of $y$, $B=(B_0)_{\mathfrak q}=\cO_{Y,y}$, and $\mathfrak p\subset S$ the homogeneous prime of $y$. Then
\[
S_{\mathfrak p}\cong B[T,T^{-1}]_{\mathfrak qB[T,T^{-1}]},\qquad M_{\mathfrak p}\cong\widetilde M_y\otimes_BS_{\mathfrak p},
\]
and $B\to S_{\mathfrak p}$ is a local, flat, hence faithfully flat, extension of regular local rings of dimension $2$ (the closed fibre is the field $\kappa(\mathfrak q)(T)$). The module $M_{\mathfrak p}$ is a localization of the maximal Cohen--Macaulay module $M_{\mathfrak m}$ (Section~\ref{sec:statements}); the Cohen--Macaulay property localizes \cite[\S2.1]{BH}, and full support makes $M_{\mathfrak p}$ maximal Cohen--Macaulay over the regular ring $S_{\mathfrak p}$, hence free (Auslander--Buchsbaum). By faithfully flat descent of flatness, the finite $B$-module $\widetilde M_y$ is flat, hence free. So $\widetilde M$ is an aCM bundle, of positive rank since $\Supp M=\Spec S$.

Conversely, let $\cE$ be an aCM bundle. The graded $R$-module $M:=\Gamma_*(\cE)$ is finitely generated: $H^0(Y,\cE(t))=0$ for $t\ll0$ by Serre duality and Serre vanishing, each $H^0(Y,\cE(t))$ is finite-dimensional, and $R_1\cdot H^0(Y,\cE(t))=H^0(Y,\cE(t+1))$ for $t\gg0$. For the last point let $V=R_1$; the Koszul complex $0\to\bigwedge^{h_0}V\otimes\cO_Y(-h_0H)\to\dots\to V\otimes\cO_Y(-H)\to\cO_Y\to0$ of a basis of $V$ is exact because $|H|$ is base-point-free, and after tensoring with $\cE((t+1)H)$ the vanishing of $H^i\bigl(Y,\bigwedge^{i+1}V\otimes\cE((t-i)H)\bigr)$ for $i=1,2$ and $t\gg0$ (Serre vanishing) gives the surjectivity of $V\otimes H^0(Y,\cE(t))\to H^0(Y,\cE(t+1))$. When $S=A$, $R$ is a finite $A$-module by Lemma~\ref{lem:ring}(i), $A=\bk[R_1]$ being the subring generated by $R_1$; so $M$ is a finitely generated graded $S$-module in both cases, and $\widetilde M=\cE$ because $(M_s)_0=\Gamma(D_+(s),\cE)$ for $s\in S_1$ (Lemma~\ref{lem:ring}(ii)). By \eqref{eq:lc}, $H^0_{\mathfrak m}(M)=H^1_{\mathfrak m}(M)=0$ because $M=\Gamma_*(\widetilde M)$, and $H^2_{\mathfrak m}(M)=\bigoplus_tH^1(Y,\cE(t))=0$; so $\depth_{\mathfrak m}M=3$. The two constructions are inverse to each other by \eqref{eq:lc}.
\end{proof}

\begin{proof}[Proof of Theorem \ref{thm:B}]
The statements about $R$ and $A$ follow from Lemma~\ref{lem:mcm} and Theorem~\ref{thm:A}. For the vertex: by Lemma~\ref{lem:ring}(iii) the punctured spectrum of $R$ is the complement of the zero section in $\Tot(\cO_Y(-H))$, hence smooth. Under $d<\sigma$, Theorem~\ref{thm:A} applied to $\cO_Y$ (or Remark~\ref{rem:rr}) gives $t\in\ZZ$ with $H^1(Y,\cO_Y(tH))\ne0$, i.e.\ $H^2_{\mathfrak m}(R)\ne0$ by \eqref{eq:lc}; normality of $R$ gives $\depth_{\mathfrak m}R\ge2$, hence $\depth_{\mathfrak m}R=2<3=\dim R$. Thus $R$ is not Cohen--Macaulay and the vertex is a singular point, isolated by the first part. For $A$: as noted before Lemma~\ref{lem:mcm}, $A_s=R_s$ for every $s\in A_1$, and the same charts identify $\Spec A\smallsetminus\{A_+\}$ with the complement of the zero section in $\Tot(\cO_Y(-H))$; the punctured spectrum is smooth. Moreover $H^0_{A_+}(A)=0$ since $A$ is a domain of positive dimension, $H^1_{A_+}(A)\cong R/A$ by \eqref{eq:lc} applied to $M=A$ (for which $\widetilde M=\cO_Y$ and $\Gamma_*(\cO_Y)=\bigoplus_tH^0(Y,\cO_Y(tH))=R$), and $H^2_{A_+}(A)_t\cong H^1(Y,\cO_Y(tH))\ne0$ for the $t$ above. Hence $\depth_{A_+}A=1$ if $A\ne R$ and $\depth_{A_+}A=2$ if $A=R$; in either case $A$ is not Cohen--Macaulay and its vertex is an isolated singularity.
\end{proof}

\begin{remark}
Without the hypothesis $d<\sigma$ the vertex need not be singular: for $(Y,H)=(\PP^2,\cO(1))$ the section ring is the polynomial ring $\bk[x_0,x_1,x_2]$. The smoothness of the punctured spectrum holds in general (Lemma~\ref{lem:ring}(iii)).
\end{remark}

\section{The example}\label{sec:example}

\subsection*{The Hesse surfaces}
Let $\ell_0,\dots,\ell_{11}$ be the twelve lines of the Hesse arrangement in $\PP^2$, the lines through pairs of the nine flexes of a smooth plane cubic; the arrangement has nine quadruple points, the flexes, each on four lines, and twelve double points, and the twelve lines are the four triangles of the Hesse pencil. Let $\beta\colon Z:=\mathrm{Bl}_9\PP^2\to\PP^2$ be the blow-up of the nine flexes, with exceptional curves $E_p$, let $L=\beta^*\cO(1)$, and let $\tilde L_i=L-\sum_{p\in\ell_i}E_p$ be the strict transforms. Following Hirzebruch \cite{Hi} (see also \cite{BHH}), let $X_n\subset\PP^{11}$, $n\ge2$, be the Kummer cover of $\PP^2$ of exponent $n$ branched along the arrangement: choosing $\ell_0,\ell_1,\ell_2$ to be the coordinate lines, $X_n$ is the complete intersection of the nine hypersurfaces $x_i^n=\ell_i(x_0^n,x_1^n,x_2^n)$, $i=3,\dots,11$, of degree $n$, and $p\colon X_n\to\PP^2$, $[x]\mapsto[x_0^n:x_1^n:x_2^n]$, is a $(\ZZ/n)^{11}$-cover of degree $n^{11}$ with $p^*\cO_{\PP^2}(1)=\cO_{X_n}(n)$. Let $f\colon Y_n\to Z$ be the normalization of $Z\times_{\PP^2}X_n$: it is the $(\ZZ/n)^{11}$-cover of $Z$ branched along $B=\sum_i\tilde L_i+\sum_pE_p$, a simple normal crossing divisor, with ramification index $n$ along every component \cite{Hi,Pa}. Write $G=(\ZZ/n)^{12}/\langle(1,\dots,1)\rangle$, and let $e_i$ be the images of the standard basis vectors. For each flex $p$, let $T_p$ be the set of the four lines through $p$. The inertia group of $\tilde L_i$ is generated by $e_i$ and that of $E_p$ by $\sum_{j\in T_p}e_j$; at every node of $B$ the two inertia groups generate a subgroup $(\ZZ/n)^2$, so the cover is \'etale locally $(u,v)\mapsto(u^n,v^n)$ and $Y_n$ is smooth \cite{Pa}.
The induced morphism $r\colon Y_n\to X_n$, with $\beta\circ f=p\circ r$, is a resolution of the singularities of $X_n$. These lie over the nine flexes: over a double point of the arrangement $X_n$ is \'etale locally $\{x_i^n=u,\ x_j^n=v\}$ with $u,v$ local coordinates, over a smooth point of a line it is $\{x_i^n=u\}$, and elsewhere it is \'etale over $\PP^2$; so $r$ is an isomorphism over $p^{-1}(\PP^2\smallsetminus\{\text{flexes}\})$. Put
\[
A_n:=r^*\cO_{X_n}(1),\qquad E'_p:=f^{-1}(E_p)_{\mathrm{red}},\qquad E_n:=\sum_pE'_p,\qquad D_i:=f^{-1}(\tilde L_i)_{\mathrm{red}} .
\]
Total ramification along $B$ gives $f^*E_p=nE'_p$ and $f^*\tilde L_i=nD_i$, and $f^*L=r^*p^*\cO(1)=nA_n$; that is, $A_n=\tfrac1nf^*L$ and $E_n=\tfrac1nf^*\sum_pE_p$, with $E_n$ the reduced exceptional divisor of $r$. Following \cite[Thm.~1.2]{An} put
\[
H_n:=4A_n-E_n,\qquad\text{so that}\qquad f^*H_Z=nH_n\quad\text{for}\quad H_Z:=4L-\textstyle\sum_pE_p .
\]

\begin{proposition}\label{prop:bpf}
For every $n\ge2$ the linear system $|H_n|$ is base-point-free and $H_n\cdot C>0$ for every irreducible curve $C\subset Y_n$; hence $\varphi_{H_n}$ is finite and $H_n$ is ample.
\end{proposition}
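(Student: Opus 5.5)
The plan is to write $H_n$ as the sum of two base-point-free classes on $Y_n$,
\[
H_n=(3A_n-E_n)+A_n ,
\]
and to deduce positivity on curves from the corresponding statement for $H_Z=4L-\sum_pE_p$ on $Z$, via $f^*H_Z=nH_n$.

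The class $A_n=r^*\cO_{X_n}(1)$ is base-point-free, being a pull-back from $X_n\subset\PP^{11}$. The main step is base-point-freeness of $3A_n-E_n$, and for it I will write down explicit sections, one for each of the four triangles of the Hesse pencil. Let $T=\{\ell_i,\ell_j,\ell_k\}$ be a triangle. The cubic form $x_ix_jx_k$ is a section of $\cO_{X_n}(3)$, so $r^*(x_ix_jx_k)$ is a section of $3A_n$.

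I will compute the divisor of $r^*(x_ix_jx_k)$ from $x_i^n=\ell_i(x_0^n,x_1^n,x_2^n)$. On $Y_n$ this reads $(r^*x_i)^n=f^*\beta^*\ell_i$, and the right-hand side has divisor
\[
n\,D_i+n\sum_{p\in\ell_i}E'_p ,
\]
because $\beta^*\ell_i=\tilde L_i+\sum_{p\in\ell_i}E_p$ and $f$ is totally ramified of index $n$ along every component of $B$. Taking $n$-th roots, $\operatorname{div}r^*x_i=D_i+\sum_{p\in\ell_i}E'_p$. The three lines of a triangle partition the nine flexes into three sets of three, so each flex lies on exactly one line of $T$. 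Summing over $T$ therefore gives
\[
\operatorname{div}r^*(x_ix_jx_k)=D_i+D_j+D_k+E_n .
\]
Consequently $s_T:=r^*(x_ix_jx_k)/s_{E_n}$ is a section of $3A_n-E_n$ with zero locus $D_i\cup D_j\cup D_k=f^{-1}(\tilde L_i\cup\tilde L_j\cup\tilde L_k)$.

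The strict transforms of two distinct triangles are two distinct fibres of the elliptic fibration $Z\to\PP^1$ given by the Hesse pencil $|3L-\sum_pE_p|$, so they are disjoint on $Z$. Hence the zero loci of $s_T$ and $s_{T'}$ for two different triangles are disjoint, and $3A_n-E_n$ is base-point-free. By the decomposition above, $|H_n|$ is base-point-free. The only real care needed is to check that the $n$-th root computation is valid globally on $Y_n$ and not just generically. It is: $Y_n$ is normal, and the divisor of the rational section $r^*x_i$ is determined by its divisor after raising to the $n$-th power.

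For positivity I first show $H_Z$ is ample on $Z$. Write $H_Z=F+L$ with $F=3L-\sum_pE_p$ the class of a fibre; both $F$ and $L$ are nef. Let $C\subset Z$ be an irreducible curve.
\begin{itemize}
\item If $L\cdot C>0$, then $H_Z\cdot C\ge L\cdot C>0$.
\item If $L\cdot C=0$, then $C$ is contracted by $\beta$, so $C=E_p$ for some $p$, and $H_Z\cdot E_p=1$.
\end{itemize}
Now let $C\subset Y_n$ be an irreducible curve. Since $f$ is finite, $f_*C$ is a nonzero effective $1$-cycle, and the projection formula gives
\[
H_n\cdot C=\tfrac1n\,H_Z\cdot f_*C>0 .
\]
Finiteness of $\varphi_{H_n}$ and ampleness of $H_n$ then follow as in Section~\ref{sec:statements}. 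A curve contracted by $\varphi_{H_n}$ would have $H_n\cdot C=0$, so $\varphi_{H_n}$ is quasi-finite and, being projective, finite. Then $H_n=\varphi_{H_n}^*\cO(1)$ is ample (or directly: $nH_n=f^*H_Z$ is the pull-back of an ample class by a finite map).
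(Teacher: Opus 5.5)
Your proposal is correct and follows the paper's proof essentially step by step: the same decomposition $H_n=A_n+(3A_n-E_n)$, the same triangle sections with divisor $D_i+D_j+D_k+E_n$, and the same reduction of positivity to $H_Z=-K_Z+L$ on $Z$ via $f^*H_Z=nH_n$. The variations are minor. You get disjointness of two triangles from their being distinct fibres of the Hesse pencil fibration, where the paper checks directly that lines of different triangles meet at flexes. You also deduce finiteness directly from positivity on curves rather than invoking Nakai--Moishezon; note that your stated claim that $H_Z$ itself is ample needs in addition $H_Z^2=7>0$, or the base-point-freeness of $H_Z=F+L$.
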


\begin{proof}
(a) $A_n=r^*\cO_{X_n}(1)$ is base-point-free.

(b) The coordinate $x_i$ of $\PP^{11}$ restricts to a section of $\cO_{X_n}(1)$ with $n\operatorname{div}(x_i)=\operatorname{div}(x_i^n)=p^*\ell_i$; hence $r^*x_i\in H^0(Y_n,\cO(A_n))$ satisfies $n\operatorname{div}(r^*x_i)=f^*\beta^*\ell_i=f^*\bigl(\tilde L_i+\sum_{p\in\ell_i}E_p\bigr)=n\bigl(D_i+\sum_{p\in\ell_i}E'_p\bigr)$, i.e.
\[
\operatorname{div}(r^*x_i)=D_i+\sum_{p\in\ell_i}E'_p .
\]

(c) Let $T$ be one of the four triangles of the Hesse pencil. Its three lines contain the nine flexes, each flex lying on exactly one of them (the vertices of the four triangles are the twelve double points). By (b), $\Phi_T:=\prod_{i\in T}r^*x_i\in H^0(Y_n,\cO(3A_n))$ has divisor $\sum_{i\in T}D_i+E_n$; hence $\Phi_T=\Psi_T\cdot z$, where $z$ is the canonical section of $\cO(E_n)$ and $\Psi_T\in H^0(Y_n,\cO(3A_n-E_n))$ has divisor $\sum_{i\in T}D_i$.

(d) Two lines $\ell_i\in T$ and $\ell_j\in T'$ from different triangles meet at a flex: $\ell_i$ carries three flexes, each lying on exactly one line of $T'$, and these three lines are distinct (a line of $T'$ through two flexes of $\ell_i$ would be $\ell_i$); so every line of $T'$ meets $\ell_i$ at one of its flexes. Since distinct lines through a blown-up point have disjoint strict transforms, $\tilde L_i\cap\tilde L_j=\emptyset$ and $D_i\cap D_j=f^{-1}(\tilde L_i\cap\tilde L_j)=\emptyset$. Hence for $T\ne T'$ the sections $\Psi_T$ and $\Psi_{T'}$ have disjoint zero loci, and $|3A_n-E_n|$ is base-point-free.

(e) $H_n=A_n+(3A_n-E_n)$ is base-point-free, as a sum of base-point-free divisors. It is ample: on $Z$ one has $H_Z=4L-\sum_pE_p=-K_Z+L$, where $-K_Z=3L-\sum_pE_p$, the class of the Hesse pencil, is nef because the pencil has no base points on $Z$, and $L$ is nef; so $H_Z\cdot E_p=1$, $H_Z\cdot C\ge L\cdot C>0$ for every other irreducible curve $C$, and $H_Z^2=7>0$, whence $H_Z$ is ample by the Nakai--Moishezon criterion, and so is $H_n=\tfrac1nf^*H_Z$, $f$ being finite. An ample base-point-free divisor defines a finite morphism (Section~\ref{sec:statements}).
\end{proof}

Thus the pairs $(Y_n,H_n)$ satisfy the standing hypotheses of Section~\ref{sec:statements}. Moreover, by the formula for $K_{Y_n}$ below, $K_{Y_n}=(2n-3)H_n+nA_n$ is ample for every $n\ge2$, $H_n$ being ample and $A_n$ nef; so the surfaces $Y_n$ are minimal of general type. In particular, $r\colon Y_n\to X_n$ has no exceptional $(-1)$-curves and is the minimal resolution. By uniqueness, it identifies $Y_n$ over $X_n$ with the blow-up model of \cite[\S3.2]{An}, preserving $A_n=r^*\cO_{X_n}(1)$ and the reduced exceptional divisor $E_n$. By \eqref{eq:alln} below, $H_n^2=7n^9<(3n^2-11)n^9=K_{Y_n}^2-8\chi(\cO_{Y_n})$ exactly for $n\ge3$. This is the hypothesis of Theorem~\ref{thm:B}, and Corollary~\ref{cor:C} follows. (By \cite[Thm.~1.2]{An}, $H_n$ is moreover very ample for $n\ge3$; this is not used here.)

For $n=3$, with $\lambda:=3^9=19683$, the invariants are
\begin{gather*}
A^2=\lambda,\qquad E^2=-9\lambda,\qquad A\cdot E=0,\qquad K=15A-3E,\\
H^2=7\lambda,\qquad K\cdot H=33\lambda,\qquad K^2=144\lambda,\qquad \chi(\cO_Y)=16\lambda,\qquad c_2=48\lambda,
\end{gather*}
so $Y_3$, whose canonical class is ample and satisfies $K^2=3c_2$, is a compact ball quotient by Yau's theorem (Hirzebruch's example \cite{Hi}), $\sigma=16\lambda$ and $d=7\lambda$; moreover $q(Y_3)=154$ and $p_g(Y_3)=315081$. The rest of this section derives these numbers, and the invariants of $Y_n$ for every $n$, from the structure of the cover.

\subsection*{Eigensheaves}
By Pardini \cite{Pa}, $f_*\cO_{Y_n}=\bigoplus_aL_a^{-1}$ over the characters $a\in\{0,\dots,n-1\}^{12}$ with $\sum_ia_i\equiv0\pmod n$, where
\[
L_a=d_a\,L-\sum_pb_p(a)\,E_p,\qquad d_a=\frac1n\sum_ia_i,\qquad b_p(a)=\Bigl\lfloor\frac1n\sum_{i\in T_p}a_i\Bigr\rfloor,
\]
$T_p$ being the set of the four lines through $p$; the residue of $\sum_{i\in T_p}a_i$ modulo $n$ is the character of the inertia along $E_p$, and subtracting it produces the integer part. Consequently, for every divisor $D_Z$ on $Z$,
\begin{equation}\label{eq:eigen}
h^i\bigl(Y_n,f^*D_Z\bigr)=\sum_ah^i\bigl(Z,D_Z-L_a\bigr)\qquad(i\ge0).
\end{equation}
The projection formula, applied to $nA_n=f^*L$ and $nE_n=f^*\sum_pE_p$ with $\deg f=n^{11}$, gives
\begin{equation}\label{eq:hesse-intersections}
\begin{gathered}
A_n^2=\frac{n^{11}}{n^2}L^2=n^9,\qquad
A_n\cdot E_n=\frac{n^{11}}{n^2}L\cdot\sum_pE_p=0,\\
E_n^2=\frac{n^{11}}{n^2}\Bigl(\sum_pE_p\Bigr)^2=-9n^9.
\end{gathered}
\end{equation}

\subsection*{The canonical class}
The branch divisor is reduced, with ramification index $n$ along each component, and has class $B=\sum_i\tilde L_i+\sum_pE_p=12L-3\sum_pE_p$. With $K_Z=-3L+\sum_pE_p$, Hurwitz gives
\[
K_{Y_n}=f^*\Bigl(K_Z+\bigl(1-\tfrac1n\bigr)B\Bigr)=(9n-12)A_n+(3-2n)E_n ,
\]
whence, for every $n$,
\[
K_{Y_n}^2=(45n^2-108n+63)\,n^9=9(5n-7)(n-1)\,n^9,\qquad K_{Y_n}\cdot H_n=(18n-21)\,n^9,\qquad H_n^2=7n^9 ;
\]
for $n=3$, $K=15A-3E$, $K^2=144\lambda$, $K\cdot H=33\lambda$, $H^2=7\lambda$. The Euler polynomial in the direction $H$ alone would determine $H^2$, $K\cdot H$ and $\chi$ but not $K^2$; this is why $K^2$ is derived from the cover.

\subsection*{Euler characteristics for every \texorpdfstring{$n$}{n}}
Let $\Av$ denote the average over the $n^{11}$ characters. Any eleven of the twelve coordinates $a_i$ are independent and uniform on $\{0,\dots,n-1\}$, hence so are any four of them, and any two, since any two coordinates lie in a set of eleven. In particular the $a_i$ are pairwise uncorrelated, so $\sum_ia_i$ has the same first two moments as a sum of twelve independent uniforms, namely mean $6(n-1)$ and variance $n^2-1$. (Equivalently: writing the average over the characters as $\frac1{n^{12}}\sum_{j=0}^{n-1}\sum_a\omega^{j\sum_ia_i}(-)$ with $\omega$ a primitive $n$-th root of unity, every term with $j\ne0$ carries at least ten factors $\sum_{a_i}\omega^{ja_i}=0$ when applied to $\sum_ia_i$ or to $(\sum_ia_i)^2$, so only $j=0$ contributes and the constraint may be ignored.) Thus
\[
\Av(d_a)=\frac{6(n-1)}n,\qquad \Av(d_a^2)=\frac{37n^2-72n+35}{n^2}.
\]
For a quadruple point $p$ put $s_p=\sum_{i\in T_p}a_i$ and let $r_p$ be the residue of $s_p$ modulo $n$, so that $b_p=(s_p-r_p)/n$. Adding to any one coordinate $a_i$, $i\in T_p$, the sum of the three others, which is independent of $a_i$ and uniform modulo $n$, shows that $r_p$ is uniform on $\{0,\dots,n-1\}$ and independent of each single $a_i$, $i\in T_p$. The variable $r_p$ is of course not independent of $s_p$, being a function of it; but $s_p$ is the sum of four variables each of which is independent of $r_p$, so by linearity $\Av(s_pr_p)=\sum_{i\in T_p}\Av(a_i)\Av(r_p)=4\cdot\tfrac{n-1}2\cdot\tfrac{n-1}2=(n-1)^2$. With $\Av(s_p)=2(n-1)$, $\Av(s_p^2)=\frac{n^2-1}3+4(n-1)^2$ and $\Av(r_p^2)=\frac{(n-1)(2n-1)}6$ this gives
\[
\Av(b_p)=\frac{3(n-1)}{2n},\qquad \Av(b_p^2)=\frac{16n^2-27n+11}{6n^2}.
\]
For $D=uL-\sum_pm_pE_p$ on $Z$, Riemann--Roch gives $\chi(Z,\cO_Z(D))=1+\tfrac12\bigl(u^2+3u-\sum_p(m_p^2+m_p)\bigr)$. Applying this to $D=-L_a$ and averaging, \eqref{eq:eigen} at $D_Z=0$ yields
\[
\chi(\cO_{Y_n})=n^{11}\Bigl(1+\tfrac12\bigl(\Av(d_a^2)-3\Av(d_a)-9\Av(b_p^2)+9\Av(b_p)\bigr)\Bigr)=\frac{21n^2-54n+37}{4}\,n^9,
\]
and therefore, by Noether's formula,
\begin{equation}\label{eq:alln}
\begin{gathered}
c_2(Y_n)=(18n^2-54n+48)\,n^9,\qquad \sigma(Y_n)=K_{Y_n}^2-8\chi(\cO_{Y_n})=(3n^2-11)\,n^9,\\
K_{Y_n}^2-3c_2(Y_n)=-9(n-3)^2n^9 .
\end{gathered}
\end{equation}
In particular $H_n^2=7n^9<\sigma(Y_n)$ exactly for $n\ge3$, and $Y_n$ is a ball quotient only for $n=3$. As an independent check of $c_2$, count Euler numbers: the branch divisor $B\subset Z$ has $21$ smooth rational components and $48$ nodes (the twelve double points of the arrangement and the $36$ points $\tilde L_i\cap E_p$), so $e(Z\smallsetminus B)=18$ and $e(B\smallsetminus\{\text{nodes}\})=-54$; the fibres of $f$ have $n^{11}$, $n^{10}$ and $n^9$ points over $Z\smallsetminus B$, over the smooth locus of $B$ and over the nodes, whence $e(Y_n)=18n^{11}-54n^{10}+48n^9$, in agreement with \eqref{eq:alln}. For the first exponents:
\[
\begin{array}{c|ccccc}
n & H_n^2/n^9 & K_{Y_n}\cdot H_n/n^9 & K_{Y_n}^2/n^9 & \chi(\cO_{Y_n})/n^9 & \sigma(Y_n)/n^9\\ \hline
3 & 7 & 33 & 144 & 16 & 16\\
4 & 7 & 51 & 351 & 157/4 & 37\\
5 & 7 & 69 & 648 & 73 & 64
\end{array}
\]

\subsection*{A closed formula for \texorpdfstring{$n=3$}{n=3}}
Let $n=3$. The $81$ values of the four coordinates in $T_p$ give the distribution $(15,51,15)$ for $b_p=0,1,2$, and $\sum_ia_i$ has mean $12$ and variance $8$, so the averages above specialize to
\[
\Av(d_a)=4,\qquad\Av(d_a^2)=\frac{152}{9},\qquad\Av(b_p)=1,\qquad\Av(b_p^2)=\frac{37}{27}.
\]
Applying Riemann--Roch on $Z$ to $D_s(a):=sH_Z-L_a=(4s-d_a)L-\sum_p(s-b_p)E_p$ and averaging,
\begin{equation}\label{eq:closed}
\sum_a\chi\bigl(Z,\cO_Z(D_s(a))\bigr)=3^{11}\Bigl(\frac{16}{9}+\frac{7s^2-11s}{2}\Bigr)=\lambda\Bigl(16+\frac{63s^2-99s}{2}\Bigr)\qquad(s\in\ZZ).
\end{equation}
By \eqref{eq:eigen} and $f^*H_Z=3H$, the left side is $\chi(Y_3,\cO_Y(3sH))$; the right side is $\chi(\cO_Y)+\tfrac{3s}{2}H\cdot(3sH-K)$ with $\chi(\cO_Y)=16\lambda$, $H^2=7\lambda$, $K\cdot H=33\lambda$. Thus \eqref{eq:closed} recovers $\chi(\cO_{Y_3})=16\cdot3^9$ and confirms $H^2$ and $K\cdot H$; Noether's formula then gives $c_2=48\lambda$, and $K^2=3c_2$. An explicit twist on which the failure of $\cO_{Y_3}$ to be aCM is visible: Riemann--Roch gives $\chi(\cO_{Y_3}(2H_3))=16\lambda+\tfrac12(4\cdot7-2\cdot33)\lambda=-3\lambda$, so $h^1(Y_3,\cO_{Y_3}(2H_3))\ge3\cdot3^9=59049$.

\subsection*{Irregularity and \texorpdfstring{$h^0(3A)$}{h0(3A)}}
By \eqref{eq:eigen} at $D_Z=0$, $q(Y_3)=\sum_ah^1(Z,L_a^{-1})$, a sum of superabundances of linear systems of plane curves through subsets of the nine inflection points of the Hesse pencil. For $D=-L_a$, $a\ne0$, one has $h^0(Z,D)=0$, and by Serre duality $h^2(Z,D)=h^0(Z,K_Z+L_a)$ is the dimension of the space of plane curves of degree $d_a-3$ through the points $p$ with $b_p=2$, the exceptional curves with positive coefficient in $K_Z+L_a$ being fixed components. The value $q(Y_3)=154$ is due to Ishida \cite{Is} and Naie \cite[Prop.~5.9]{Na} (Prop.~4.10 in the arXiv version).
It is accounted for by three explicit families of characters, each of whose members contributes $h^1(Z,L_a^{-1})=1$ by the Riemann--Roch formula above ($h^0=0$, and $\chi=-1$, $-1$, $+1$ with $h^2=0$, $0$, $2$ respectively):
\[
q(Y_3)=90+54+10,
\]
the three families being those with $(d_a,\#\{p:b_p=2\})=(2,1),(4,4),(6,9)$: all the weight on the four lines through one quadruple point ($9\cdot10$ characters); $a=2$ on the six lines joining four flexes no three of which are collinear, a Ceva subarrangement ($\binom94-12\cdot6=54$ such quadruples, and $b_q\le1$ at the other five flexes); $a=2$ on three of the four triangles of the arrangement, or $a=2$ on two triangles and $a=1$ on the other two ($4+6$ characters, $h^2$ being the Hesse pencil itself). Since these $154$ characters already account for the value $154$, every other character has $h^1(Z,L_a^{-1})=0$.
Hence $p_g(Y_3)=\chi-1+q=315081$. Neither $q$ nor $p_g$ enters the proofs; only $\chi$, $K^2$, $H^2$ and $K\cdot H$ do. Note, however, that the first family exists for every $n\ge2$: a character supported on $T_p$ with $\sum_ia_i=2n$ (e.g.\ $(n-1,n-1,2,0)$ for $n\ge3$, $(1,1,1,1)$ for $n=2$) has $L_a=2L-2E_p$ and $h^1(Z,L_a^{-1})=1$, so $q(Y_n)\ge1$ for every $n\ge2$. Thus $H^1(Y_n,\cO_{Y_n})\ne0$, and the failure of $R(Y_n,H)$ to be Cohen--Macaulay is elementary for every polarization $H$; the content of Corollary~\ref{cor:C} is the absence of all graded maximal Cohen--Macaulay modules.

Finally $h^0(Y_3,\cO(3A))=h^0(Y_3,f^*L)=\sum_ah^0(Z,L-L_a)$. For $d_a\ge2$ the coefficient of $L$ in $L-L_a$ is negative and $L$ is nef, so there are no sections; the trivial character contributes $h^0(Z,L)=3$; for $d_a=1$ the divisor $\sum_pb_pE_p$ has exactly one section (each $E_p$ with positive coefficient is a fixed component), and there are $\#\{a\in\{0,1,2\}^{12}:\sum_ia_i=3\}=\binom{14}{3}-12=352$ such characters. Hence $h^0(3A)=355=\binom{14}{3}-9$, in agreement with the degree-$3$ part of the coordinate ring of the complete intersection $X_3$.

\begin{remark}[more examples]\label{rem:more}
Since $\sigma(Y_n)/H_n^2=(3n^2-11)/7$ is unbounded, for fixed large $n$ one also obtains section rings $R(Y_n,H)$ with the same property for many polarizations $H$: all those with $|H|$ base-point-free, $\varphi_H$ finite and $H^2<(3n^2-11)n^9$, in particular the multiples $mH_n$ with $7m^2<3n^2-11$, which are base-point-free and ample by Proposition~\ref{prop:bpf}. The corresponding rings $R(Y_n,mH_n)=R(Y_n,H_n)^{(m)}$ are the Veronese subrings of $R(Y_n,H_n)$. Since $\sigma(Y_n)=(3n^2-11)n^9$ is strictly increasing, the surfaces $Y_n$ are pairwise non-homeomorphic, and the rings $R(Y_n,H_n)$, whose $\Proj$ recover the $Y_n$, are pairwise non-isomorphic as graded rings.

The same method applies to Hirzebruch's other classical family \cite{Hi}, treated in \cite{An} as well: the arrangement of the six lines through the pairs of four general points (four triple points, three double points), with $Z'=\mathrm{Bl}_4\PP^2$ the quintic del Pezzo surface, $-K_{Z'}=3L-\sum_pE_p$ ample, and $f'\colon Y'_n\to Z'$ the smooth $(\ZZ/n)^5$-cover branched along the six strict transforms and the four exceptional curves. With $A'=\tfrac1nf'^*L$ and $E'=\tfrac1nf'^*\sum_pE_p$, so that $A'^2=n^3$, $E'^2=-4n^3$, Hurwitz gives $K_{Y'_n}=(3n-6)A'+(2-n)E'$ and $K_{Y'_n}^2=5(n-2)^2n^3$, the count of Euler numbers (ten rational components of the branch divisor, fifteen nodes) gives $c_2(Y'_n)=(2n^2-10n+15)n^3$, hence $\chi(\cO_{Y'_n})=\tfrac1{12}(7n^2-30n+35)n^3$ and $\sigma(Y'_n)=\tfrac13(n^2-10)n^3$. The divisor $H'=3A'-E'=\tfrac1nf'^*(-K_{Z'})$ is ample, and base-point-free: $|A'|$ is pulled back from the Kummer model in $\PP^5$, and $|2A'-E'|$ is base-point-free because the product of the two coordinates of a pair of opposite sides, which together contain the four points once each, is the canonical section of $\cO(E')$ times a section with divisor $D_i+D_j$, and two such divisors for different pairs are disjoint (sides from different pairs meet at a blown-up point). Since $H'^2=5n^3<\sigma(Y'_n)$ exactly for $n\ge6$, the rings $R(Y'_n,H')$, $n\ge6$, have no graded maximal Cohen--Macaulay module either (Corollary~\ref{cor:qgor} and Remark~\ref{rem:qgor}).
\end{remark}

\begin{corollary}[a quasi-Gorenstein family]\label{cor:qgor}
For every $n\ge6$ the section ring $R=R(Y'_n,H')$ of the second Hirzebruch family of Remark~\textup{\ref{rem:more}} is a three-dimensional normal graded \emph{quasi-Gorenstein} $\CC$-domain, with $\omega_R\cong R(n-2)$ and an isolated non-Cohen--Macaulay singularity at the vertex, admitting no nonzero finitely generated graded maximal Cohen--Macaulay module. Its smallest member, $n=6$, has
\[
H'^2=1080<\sigma(Y'_6)=1872,\quad K_{Y'_6}^2=17280,\quad c_2(Y'_6)=5832,\quad \chi(\cO_{Y'_6})=1926,\quad \omega_R\cong R(4).
\]
\end{corollary}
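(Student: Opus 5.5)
The plan is to reduce everything to results already established, applied to the pair $(Y'_n,H')$ of Remark~\ref{rem:more}. The steps are: check the standing hypotheses and the numerical inequality $H'^2<\sigma(Y'_n)$; get normality, the isolated non-Cohen--Macaulay vertex and the absence of graded MCM modules from Lemma~\ref{lem:ring} and Theorem~\ref{thm:B}; show $K_{Y'_n}$ is a multiple of $H'$ to obtain the quasi-Gorenstein property; and finally evaluate at $n=6$.

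\emph{Hypotheses and inequality.} First I verify the standing hypotheses of Section~\ref{sec:statements}. Base-point-freeness of $H'=A'+(2A'-E')$ follows as in Proposition~\ref{prop:bpf}: $|A'|$ is pulled back from the Kummer model in $\PP^5$, and $|2A'-E'|$ is base-point-free by the argument with pairs of opposite sides sketched in Remark~\ref{rem:more}. Ampleness holds because $H'=\tfrac1nf'^*(-K_{Z'})$ with $-K_{Z'}$ ample on the quintic del Pezzo surface and $f'$ finite. Next I confirm the invariants. Hurwitz applied to the branch class $6L-2\sum_pE_p$ gives $K_{Y'_n}=(3n-6)A'+(2-n)E'$. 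The intersection numbers $A'^2=n^3$, $E'^2=-4n^3$, $A'\cdot E'=0$ come from the projection formula with $\deg f'=n^5$. From these, $K^2=5(n-2)^2n^3$ and $H'^2=5n^3$. The Euler number count (four smooth points of $Z'\smallsetminus B$ contributing $e=7-\dots$, ten rational curves, fifteen nodes) gives $c_2=(2n^2-10n+15)n^3$, and Noether's formula then gives $\chi$ and $\sigma=\tfrac13(n^2-10)n^3$. The inequality $5n^3<\tfrac13(n^2-10)n^3$ is equivalent to $n^2>25$, i.e.\ $n\ge6$. Theorem~\ref{thm:B} then yields that there is no graded MCM module and that $\depth_{\mathfrak m}R=2$, while Lemma~\ref{lem:ring}(iii) gives normality and a smooth punctured spectrum.

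\emph{Quasi-Gorenstein.} The key observation is
\[
K_{Y'_n}=(n-2)(3A'-E')=(n-2)H'.
\]
I then identify $\omega_R$ with $\Gamma_*(\omega_{Y'_n})=\bigoplus_tH^0(Y'_n,\cO((n-2+t)H'))=R(n-2)$. Here I use that $\omega_R$ is reflexive (i.e.\ $S_2$) on the normal ring $R$, so it is determined by its restriction to the punctured spectrum $U$. By Lemma~\ref{lem:ring}(iii), $U$ is the $\mathbb G_m$-bundle over $Y'_n$ given as the complement of the zero section in $\Tot(\cO(-H'))$. On it the relative dualizing sheaf is trivialized by $ds/s$, which is of degree $0$, so $\omega_U$ is the pull-back of $\omega_{Y}$ with its natural grading. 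This is the standard computation of the canonical module of a section ring (Goto--Watanabe \cite{GoWa}).

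\emph{The case $n=6$ and the main difficulty.} For $n=6$ one has $n^3=216$, and direct substitution gives $H'^2=1080$, $K^2=5\cdot16\cdot216=17280$, $c_2=27\cdot216=5832$ and $\chi=107\cdot216/12=1926$. As a check, $\sigma=17280-8\cdot1926=1872=\tfrac{26}{3}\cdot216$, and $\omega_R\cong R(4)$. I expect the main obstacle to be bookkeeping rather than conceptual. It consists of, first, verifying the Euler count and the disjointness of the divisors $D_i+D_j$ for different pairs of opposite sides, which underlies base-point-freeness; and second, pinning down the grading shift in the identification of $\omega_R$, namely the sign of $n-2$, which depends on the convention $\omega_R\cong\bigoplus_tH^0(K_Y+tH)$.
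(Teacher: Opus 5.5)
Your outline matches the paper's proof: Theorem~\ref{thm:B} for the non-existence and the depth, Lemma~\ref{lem:ring}(iii) for normality, $K_{Y'_n}=(n-2)H'$ together with the Goto--Watanabe formula for $\omega_R$, and substitution at $n=6$. Your route to $\omega_R$, via reflexivity and the $\mathbb G_m$-torsor $U$, is a legitimate alternative to the paper's graded local duality computation $(\omega_R)_t\cong H^3_{\mathfrak m}(R)_{-t}^{\vee}\cong H^2(Y'_n,\cO(-tH'))^{\vee}\cong H^0(Y'_n,\omega_{Y'_n}(tH'))$. That computation also settles your worry about the sign of the shift.

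There is, however, a gap at the one non-formal step, the identity $K_{Y'_n}=(n-2)H'$. Quasi-Gorensteinness needs an honest isomorphism of line bundles $\omega_{Y'_n}\cong\cO_{Y'_n}((n-2)H')$. If $K_{Y'_n}-(n-2)H'$ were a nonzero torsion class, then $\omega_R=\bigoplus_tH^0(K_{Y'_n}+tH')$ would be isomorphic to no shift $R(a)$, since a graded isomorphism $R(a)\cong\omega_R$ sheafifies to $\cO(aH')\cong\omega_{Y'_n}$.

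Your derivation, Hurwitz applied to the branch class $6L-2\sum_pE_p$, cannot exclude this. Write $D'_i$ for the reduced ramification divisor over the $i$-th line. Pulling back $B\sim 6L-2\sum_pE_p$ only yields $n\bigl(\sum_iD'_i+E'\bigr)\sim n(6A'-2E')$. So it determines the ramification divisor, and hence $K_{Y'_n}$, only modulo $n$-torsion in $\operatorname{Pic}(Y'_n)$; this is the fact that $\tfrac1n f'^*$ is not well defined on divisor classes. Since $q(Y'_n)\ge1$ (Remark~\ref{rem:qgor}), $\operatorname{Pic}^0(Y'_n)$ is a positive-dimensional abelian variety with nontrivial $n$-torsion, so the ambiguity is genuine. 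As written, your argument gives only numerical ($\QQ$-linear) equivalence. That suffices for the intersection numbers and for Theorem~\ref{thm:B}, but not for $\omega_R\cong R(n-2)$.

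The fix is what the paper does.
\begin{itemize}
\item Take $A'$ to be the actual pull-back of $\cO_{\PP^5}(1)$ from the Kummer model, so that $f'^*L\sim nA'$; also $f'^*E_p=nE'_p$ by total ramification.
\item Use the integral Hurwitz formula $K_{Y'_n}\sim f'^*K_{Z'}+(n-1)\bigl(\sum_iD'_i+E'\bigr)$.
\item Obtain the class of $\sum_iD'_i$ from the coordinate sections of the Kummer model: their pull-backs have divisors $D'_i+\sum_{p\in\ell_i}E'_p$. Since each of the four triple points lies on three of the six lines, $\sum_iD'_i\sim 6A'-3E'$.
\end{itemize}
Then $K_{Y'_n}\sim n(-3A'+E')+(n-1)(6A'-2E')=(n-2)(3A'-E')$ as integral divisors.

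A smaller correction: your Euler count is garbled. It should read $e(Z'\smallsetminus B)=7-(20-15)=2$ and $e(B\smallsetminus\{\text{nodes}\})=20-30=-10$, giving $e(Y'_n)=2n^5-10n^4+15n^3$.
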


\begin{proof}
Write $D_i'$ for the reduced ramification divisor above the strict transform of the $i$th line of the six-line arrangement. The coordinate sections used in Remark~\ref{rem:more} give $\sum_iD_i'\sim6A'-3E'$, each of the four triple points lying on three lines. The integral Hurwitz formula therefore yields
\[
K_{Y'_n}\sim n(-3A'+E')+(n-1)\Bigl(\sum_iD_i'+E'\Bigr)
\sim(n-2)(3A'-E')=(n-2)H'.
\]
This is a linear equivalence of integral divisors, so it gives $\cO_{Y'_n}(K_{Y'_n})\cong\cO_{Y'_n}((n-2)H')$. By the formula $\omega_R=\bigoplus_{t\in\ZZ}H^0\bigl(Y'_n,\cO(K_{Y'_n}+tH')\bigr)$ for the graded canonical module of a section ring \cite[(5.1.8)]{GoWa}, \cite{Wa}, which needs no Cohen--Macaulay hypothesis, $\omega_R\cong R(n-2)$. Indeed, graded local duality, \eqref{eq:lc} and Serre duality give
\[
(\omega_R)_t\cong H^3_{R_+}(R)_{-t}^{\vee}
\cong H^2(Y'_n,\cO_{Y'_n}(-tH'))^{\vee}
\cong H^0(Y'_n,\omega_{Y'_n}(tH')),
\]
where $(-)^{\vee}$ denotes $\CC$-linear duality; these isomorphisms respect multiplication by naturality. The remaining assertions are Theorem~\ref{thm:B} applied to $(Y'_n,H')$, whose hypotheses hold for $n\ge6$ by Remark~\ref{rem:more}; the displayed numbers are the formulas of that remark at $n=6$.
\end{proof}

\begin{remark}\label{rem:qgor}
Quasi-Gorenstein does not imply Gorenstein here, these rings not being Cohen--Macaulay. The rings $R(Y_n,H_n)$ of the Hesse family are not quasi-Gorenstein, so the two families differ in this respect. Indeed, writing $R=R(Y_n,H_n)$, quasi-Gorensteinness would give $\omega_R\cong R(a)$ for some $a\in\ZZ$. Even quasi-Gorensteinness at $\mathfrak m=R_+$ implies $\dim_{\CC}\omega_R/\mathfrak m\omega_R=1$; a homogeneous lift of a generator of this quotient generates $\omega_R$ by graded Nakayama, and the resulting surjection $R(a)\to\omega_R$ is an isomorphism, since both modules have rank one and $R$ is a domain. Sheafifying the canonical-module formula of \cite[(5.1.8)]{GoWa} applied to $(Y_n,H_n)$ would yield $\cO_{Y_n}(K_{Y_n})\cong\cO_{Y_n}(aH_n)$. The classes $A_n,E_n$ are numerically independent: their intersection matrix from \eqref{eq:hesse-intersections} has determinant $-9n^{18}\ne0$. Comparing coefficients in
\[
K_{Y_n}=(9n-12)A_n+(3-2n)E_n,\qquad H_n=4A_n-E_n
\]
would therefore force $a=2n-3$ and $4a=9n-12$, hence $n=0$, a contradiction for $n\ge3$.

The second family is irregular as well: with $T_p$ now the three lines through a triple point $p$, a character supported on $T_p$ with $\sum_ia_i=2n$ -- for instance $(n-1,n-1,2)$, which requires $n\ge3$ -- has $d_a=2$ and $b_p=2$, while $b_q=0$ at each of the other three triple points $q$, every one of which lies on exactly one line through $p$; so $L_a=2L-2E_p$, and Riemann--Roch on $Z'$ gives $\chi(Z',-L_a)=-1$ with $h^0(Z',-L_a)=0$ and $h^2(Z',-L_a)=h^0\bigl(Z',-L+\sum_{q\ne p}E_q-E_p\bigr)=0$, whence $h^1(Z',-L_a)=1$ and $q(Y'_n)\ge1$. As for the Hesse family, the failure of $R(Y'_n,H')$ to be Cohen--Macaulay is therefore elementary; the content of Remark~\ref{rem:more} is the absence of all graded maximal Cohen--Macaulay modules.
\end{remark}

\section{Higher dimension}\label{sec:higher}

In this section $X$ is a smooth connected projective variety of dimension $m\ge2$ over $\bk$, and $H$ is a divisor on $X$ such that $|H|$ is base-point-free with finite associated morphism, so that $H$ is ample; a vector bundle $\cE$ on $X$ is aCM with respect to $H$ if $H^i(X,\cE(tH))=0$ for $1\le i\le m-1$ and all $t\in\ZZ$. The numerical obstruction below was proved for Ulrich bundles, and for $H$ very ample, by Lopez \cite[Lem.~3.1]{Lo}. His argument runs on $X$ itself: for an Ulrich bundle $\cE$ the class $\alpha=c_1(\cE)-\tfrac{\rk\cE}2\bigl(K_X+(m+1)H\bigr)$ satisfies $\alpha\cdot H^{m-1}=0$, the number $\Delta(\cE)\cdot H^{m-2}$ differs from $\alpha^2\cdot H^{m-2}$ by a positive multiple of $\bigl(2c_2(X)-K_X^2+(m+1)H^2\bigr)\cdot H^{m-2}$, a surface section enters only to give $\alpha^2\cdot H^{m-2}\le0$ by the Hodge index theorem, and Bogomolov's inequality applies because an Ulrich bundle is semistable. The required Chern-class relations and semistability are unavailable for arbitrary aCM bundles; restricting to a smooth surface allows us instead to apply Theorem~\ref{thm:A}, which needs neither. Throughout this section, identities of cycle classes used in intersection computations are understood modulo numerical equivalence. We abbreviate $P_X:=K_X^2-2c_2(X)$; since $c_1(T_X)=-K_X$, this class is the Newton class $2\operatorname{ch}_2(T_X)$, hence additive on short exact sequences of vector bundles; it is that additivity, used in the same way in \cite[\S2]{Lo}, which makes the three computations below -- for a hyperplane section, for a product, and for a complete intersection -- immediate, and which causes the mixed terms to cancel in each of them.

\begin{proposition}\label{prop:higher}
If $(X,H)$ carries a nonzero aCM vector bundle, then
\begin{equation}\label{eq:higher}
(m+1)\,H^m\;\ge\;\bigl(K_X^2-2c_2(X)\bigr)\cdot H^{m-2}.
\end{equation}
For $m=2$ this is Theorem~\ref{thm:A}, since $K_Y^2-2c_2(Y)=3\sigma$.
\end{proposition}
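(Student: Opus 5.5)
Let $\cE$ be a nonzero aCM bundle on $(X,H)$ and reduce to the surface case by cutting down. By Bertini, since $|H|$ is base-point-free and $\mathrm{char}\,\bk=0$, a general complete intersection $Y=D_1\cap\dots\cap D_{m-2}$ of members $D_i\in|H|$ is a smooth connected projective surface. Connectedness holds because $H$ is ample and $\varphi_H$ is finite, so the image has dimension $m\ge3$ when we cut. On $Y$ the restriction $H_Y=H|_Y$ is again base-point-free with finite associated morphism (it is ample and base-point-free), so the standing hypotheses of Section~\ref{sec:statements} hold for $(Y,H_Y)$. The plan is then to show that $\cE|_Y$ is aCM with respect to $H_Y$, apply Theorem~\ref{thm:A} to it, and translate the resulting inequality $H_Y^2\ge K_Y^2-8\chi(\cO_Y)=\tfrac13(K_Y^2-2c_2(Y))$ back to $X$.

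For the aCM step, induct on the number of hyperplane sections, using a single smooth $D\in|H|$ at a time. From $0\to\cE(t-1)\to\cE(t)\to\cE|_D(t)\to0$, the long exact sequence shows that $H^i(D,\cE|_D(t))$ sits between $H^i(X,\cE(t))$ and $H^{i+1}(X,\cE(t-1))$. Both vanish for $1\le i\le m-2$ and all $t$ by the aCM hypothesis on $X$. Hence $\cE|_D$ is aCM on the $(m-1)$-fold $D$, i.e.\ $H^i(D,\cE|_D(t))=0$ for $1\le i\le (m-1)-1$. Iterating down to the surface $Y$ leaves exactly the vanishing $H^1(Y,\cE|_Y(t))=0$ for all $t$, and $\cE|_Y\ne0$.

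For the numerical translation, use adjunction and the additivity of $P=K^2-2c_2=2\operatorname{ch}_2(T)$ noted in the text. For $D\in|H|$ smooth, $0\to T_D\to T_X|_D\to\cO_D(H)\to0$ gives
\[
P_D=(P_X-H^2)|_D=\bigl(P_X-H^2\bigr)\cdot H\quad\text{(as classes pushed to $X$)}.
\]
Iterating $m-2$ times gives $P_Y=(P_X-(m-2)H^2)\cdot H^{m-2}$, while $H_Y^2=H^m$. Theorem~\ref{thm:A} gives $3H^m\ge P_Y=P_X\cdot H^{m-2}-(m-2)H^m$, which is exactly \eqref{eq:higher}. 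For $m=2$ nothing is cut and $P_Y=3\sigma$ by Noether's formula.

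I expect the main obstacle to be the bookkeeping rather than any deep step: checking that the general complete intersection is smooth and connected, and that the adjunction computation of $\operatorname{ch}_2$ is done with the correct sign ($\operatorname{ch}_2(\cO_D(H))=\tfrac12H^2$, contributing $-H^2$ to $P_D$). Both are routine given Bertini in characteristic zero, together with the cohomological characterization of connectedness via $H^0(\cO_D)=\bk$ from the vanishing $H^1(X,\cO_X(-H))=0$ when $m\ge2$.
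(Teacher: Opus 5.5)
Your proposal is correct and follows essentially the paper's proof. The paper also cuts down by general members of $|H|$ to a smooth connected surface, uses the same restriction sequence to show that $\cE$ restricts to an aCM bundle, computes $K_S^2-2c_2(S)$ from the additivity of $2\operatorname{ch}_2(T)$, and applies Theorem~\ref{thm:A}; the only difference is that you justify connectedness by Kodaira vanishing of $H^1(X,\cO_X(-H))$, where the paper cites the Enriques--Severi--Zariski corollary \cite[III.7.9]{Ha}.
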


\begin{proof}
Let $\cE$ be a nonzero aCM bundle and $m\ge3$. A general member $D\in|H|$ is smooth by Bertini's theorem for base-point-free linear systems in characteristic zero -- apply generic smoothness \cite[III.10.7]{Ha} to the incidence variety $\{(x,\Lambda):\varphi_H(x)\in\Lambda\}$, a projective bundle over $X$ and hence smooth, over the dual projective space; for $H$ very ample this is \cite[III.10.9]{Ha} -- and connected \cite[III.7.9]{Ha}, $H$ being ample and $X$ normal, hence a smooth connected projective variety of dimension $m-1$; $|H|_D|$ is base-point-free with finite associated morphism, and $\cE|_D$ is a nonzero aCM bundle on $(D,H|_D)$: in the cohomology sequence of $0\to\cE((t-1)H)\to\cE(tH)\to\cE|_D(tH)\to0$, the group $H^i(D,\cE|_D(tH))$, $1\le i\le m-2$, sits between $H^i(X,\cE(tH))$ and $H^{i+1}(X,\cE((t-1)H))$, both zero. Iterating, one reaches a smooth connected surface $S$, the intersection of $m-2$ general members of $|H|$, on which $\cE|_S$ is a nonzero aCM bundle with respect to $H_S:=H|_S$, and $|H_S|$ is base-point-free with finite morphism. Put $q=m-2$ and $d=H^m=H_S^2$. By adjunction and the normal bundle sequence $0\to T_S\to T_X|_S\to\cO_S(H)^{\oplus q}\to0$,
\[
K_S=(K_X+qH)|_S,\qquad c_2(S)=\Bigl(c_2(X)+q\,K_X\cdot H+\tfrac{q(q+1)}2H^2\Bigr)\cdot H^{m-2},
\]
whence $K_S^2-2c_2(S)=\bigl(K_X^2-2c_2(X)-qH^2\bigr)\cdot H^{m-2}$ -- equivalently, and without computing $c_2(S)$, the normal bundle sequence gives $2\operatorname{ch}_2(T_S)=\bigl(2\operatorname{ch}_2(T_X)-qH^2\bigr)|_S$ -- and $\sigma(S)=\tfrac13(K_S^2-2c_2(S))$. Theorem~\ref{thm:A} on $(S,H_S)$ gives $3d\ge(K_X^2-2c_2(X))\cdot H^{m-2}-(m-2)d$, which is \eqref{eq:higher}.
\end{proof}

Lemmas~\ref{lem:ring} and \ref{lem:mcm} hold for $(X,H)$ with the same proofs, $3$ being replaced by $m+1=\dim R(X,H)$: in Lemma~\ref{lem:ring} nothing changes except that $\bk[R_1]$ is now the homogeneous coordinate ring of the $m$-dimensional variety $\varphi_H(X)$, so that $\dim R=m+1$; in Lemma~\ref{lem:mcm}, \eqref{eq:lc} holds verbatim, the condition $\depth_{\mathfrak m}M=m+1$ forces $M=\Gamma_*(\widetilde M)$, $H^i(X,\widetilde M(t))=0$ for $1\le i\le m-1$ and $\Supp M=\Spec S$, for a closed point $x\in X$ the local rings $B=\cO_{X,x}$ and $S_{\mathfrak p}$ are regular of dimension $m$, and in the converse the Koszul argument uses the vanishing of $H^i\bigl(X,\bigwedge^{i+1}V\otimes\cE((t-i)H)\bigr)$ for $1\le i\le m$ and $t\gg0$. Hence:

\begin{theorem}\label{thm:higher}
If $(m+1)H^m<(K_X^2-2c_2(X))\cdot H^{m-2}$, then the section ring $R(X,H)$, a normal graded domain of dimension $m+1$ with smooth punctured spectrum, admits no graded maximal Cohen--Macaulay module and is not Cohen--Macaulay. If $H$ is very ample, its homogeneous coordinate ring also has dimension $m+1$ and smooth punctured spectrum, is not Cohen--Macaulay, and admits no nonzero finitely generated graded maximal Cohen--Macaulay module.
\end{theorem}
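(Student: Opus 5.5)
The plan is to combine Proposition~\ref{prop:higher} with the $m$-dimensional versions of Lemmas~\ref{lem:ring} and \ref{lem:mcm} described just before the statement, following the proof of Theorem~\ref{thm:B}. First, under the hypothesis $(m+1)H^m<(K_X^2-2c_2(X))\cdot H^{m-2}$, Proposition~\ref{prop:higher} shows that $(X,H)$ carries no nonzero aCM bundle. I would then invoke the higher-dimensional Lemma~\ref{lem:mcm}. It says that $M\mapsto\widetilde M$ sends a graded MCM module over $S\in\{R,A\}$ (depth $m+1$) to a nonzero aCM bundle, because \eqref{eq:lc} gives $H^i(X,\widetilde M(t))\cong H^{i+1}_{\mathfrak m}(M)_t=0$ for $1\le i\le m-1$. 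Together these exclude graded MCM modules over $R$, and over $A$ when $H$ is very ample.

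Next I would treat the structural claims. Lemma~\ref{lem:ring}(i),(iii), in its $m$-dimensional form, gives that $R$ is a normal domain of dimension $m+1$ and that $U=\Spec R\smallsetminus\{\mathfrak m\}$ is the complement of the zero section of $\Tot(\cO_X(-H))$, hence smooth. For $A$ I would argue as in Section~\ref{sec:proofB}. Serre vanishing gives $A_t=R_t$ for $t\gg0$, so $R/A$ has finite length, $A_s=R_s$ for $s\in A_1$, and the punctured spectra coincide. Moreover $\dim A=\dim R=m+1$, since $R$ is finite over $A=\bk[R_1]$.

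For non-Cohen--Macaulayness, the cleanest route is to avoid computing depth directly. If $S$ were Cohen--Macaulay, then $S$ itself would be a nonzero finitely generated graded module of depth $m+1$, that is, a graded MCM module over itself. This contradicts the first part. The argument works uniformly for $R$ and $A$, and it is essentially the same as noting that $\cO_X$ is not aCM.

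The only step needing care is the reduction of Proposition~\ref{prop:higher} to surfaces, since it requires the general hyperplane sections to stay smooth and connected with a base-point-free finite polarization. That step is already done earlier, so here the main obstacle is bookkeeping. I would check that the index shift $H^{i+1}_{\mathfrak m}\leftrightarrow H^i(X,\cdot)$ covers exactly $1\le i\le m-1$, so that depth $m+1$ corresponds precisely to the aCM condition in dimension $m$.
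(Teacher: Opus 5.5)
Your proposal is correct and follows essentially the same route as the paper's proof: the $m$-dimensional Lemma~\ref{lem:mcm} turns a graded MCM module into a nonzero aCM bundle, which Proposition~\ref{prop:higher} excludes, and the ring would itself be such a module if it were Cohen--Macaulay. Normality, dimension and smoothness of the punctured spectrum come from Lemma~\ref{lem:ring}, together with $A_s=R_s$ for $s\in A_1$ in the very ample case.
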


\begin{proof}
By Lemma~\ref{lem:mcm} in the form just described, a graded maximal Cohen--Macaulay module would give an aCM bundle of positive rank, excluded by Proposition~\ref{prop:higher}; the ring itself would be such a module if it were Cohen--Macaulay. Normality and the smoothness of the punctured spectrum are Lemma~\ref{lem:ring}(iii); for the homogeneous coordinate ring $A$ note again that $A_s=R_s$ for $s\in A_1$.
\end{proof}

\begin{corollary}\label{cor:products}
Let $m\ge2$ and $n\ge3$ with $7(m-1)^2<3n^2-11$, and let $X=Y_n\times\PP^{m-2}$ with $H=H_n\boxtimes\cO_{\PP^{m-2}}(1)$. Then $|H|$ is base-point-free with finite morphism and $(m+1)H^m<(K_X^2-2c_2(X))\cdot H^{m-2}$. Consequently the Segre product $R(X,H)=R(Y_n,H_n)\,\#\,\CC[s_0,\dots,s_{m-2}]$ is a normal generalized Cohen--Macaulay $\NN$-graded $\CC$-domain of dimension $m+1$ and depth $2$, with an isolated non-Cohen--Macaulay singularity at the vertex, admitting no nonzero finitely generated graded maximal Cohen--Macaulay module. For $m=3$ one may take any $n\ge4$, for $m=4$ any $n\ge5$.
\end{corollary}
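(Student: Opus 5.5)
The proof is a direct numerical check followed by an application of Theorem~\ref{thm:higher}. Throughout, put $q=m-2$, let $p_1,p_2$ be the two projections of $X=Y_n\times\PP^{q}$, and write $h$ for the hyperplane class on $\PP^q$.

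\emph{The polarization.} Since $|H_n|$ and $|\cO_{\PP^q}(1)|$ are base-point-free, so is $|H|=|p_1^*H_n+p_2^*h|$, because products of sections generate. The divisor $H$ is ample, being the exterior product of ample divisors (Proposition~\ref{prop:bpf}). By Section~\ref{sec:statements} an ample base-point-free divisor has finite associated morphism.

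\emph{The numerical inequality.} I use the additivity of $P_X=2\operatorname{ch}_2(T_X)$. Since $T_X=p_1^*T_{Y_n}\oplus p_2^*T_{\PP^q}$,
\[
P_X=p_1^*P_{Y_n}+p_2^*P_{\PP^q},\qquad P_{Y_n}\equiv3\sigma(Y_n)\ \text{(as a number)},\qquad P_{\PP^q}=(q+1)h^2,
\]
where the last value comes from the Euler sequence: $\operatorname{ch}_2(T_{\PP^q})=\operatorname{ch}_2(\cO(1)^{\oplus(q+1)})$.

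Expand $H^k=\sum_j\binom kj p_1^*H_n^j\,p_2^*h^{k-j}$ and keep only the terms of top bidegree $(2,q)$. This gives
\begin{gather*}
H^m=\tbinom m2\,7n^9,\\
P_X\cdot H^{q}=3(3n^2-11)n^9+\tbinom q2(q+1)\,7n^9 .
\end{gather*}
Hence $(m+1)H^m<P_X\cdot H^{m-2}$ is equivalent to
\[
7n^9\Bigl[\tfrac{(m+1)m(m-1)}2-\tfrac{(m-1)(m-2)(m-3)}2\Bigr]<3(3n^2-11)n^9 .
\]
The bracket simplifies to $3(m-1)^2$, so the condition is exactly $7(m-1)^2<3n^2-11$, which is the hypothesis. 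For $m=3$ it reads $28<3n^2-11$, i.e.\ $n\ge4$. For $m=4$ it reads $63<3n^2-11$, i.e.\ $n\ge5$.

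\emph{Consequences from Theorem~\ref{thm:higher}.} That theorem now gives normality, dimension $m+1$, a smooth punctured spectrum, failure of the Cohen--Macaulay property, and the absence of graded MCM modules. By Künneth, $H^0(X,\cO(tH))=H^0(Y_n,tH_n)\otimes H^0(\PP^q,\cO(t))$, which identifies $R(X,H)$ with the Segre product.

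\emph{Depth and the generalized Cohen--Macaulay property.} By \eqref{eq:lc}, it suffices to show two things:
\begin{itemize}
\item[(a)] $\bigoplus_tH^j(X,\cO(tH))$ is finite-dimensional for $1\le j\le m-1$;
\item[(b)] it is nonzero for $j=1$.
\end{itemize}
By Künneth, the only contributions to $H^j$ are $H^a(Y_n,tH_n)\otimes H^b(\PP^q,\cO(t))$ with $a+b=j$ and $b\in\{0,q\}$.

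For $b=0$ and $a\ge1$, the degree $t$ must satisfy $t\ge0$, and $H^a(Y_n,tH_n)$ vanishes for $t\gg0$ by Serre vanishing. These terms are therefore finite-dimensional.

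For $b=q\ge1$ and $j\le m-1$, one has $a\le1$. If $a=0$, the factor $H^0(Y_n,tH_n)$ needs $t\ge0$ while $H^q(\PP^q,\cO(t))$ needs $t\le-q-1$, so the term vanishes. If $a=1$, then $H^1(Y_n,tH_n)$ vanishes for $|t|\gg0$ by Serre vanishing and Serre duality.

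This proves (a). For (b), use $q(Y_n)\ge1$ from Section~\ref{sec:example}: it gives $H^1(X,\cO_X)\supseteq H^1(Y_n,\cO)\otimes H^0(\PP^q,\cO)\ne0$, hence $H^2_{\mathfrak m}(R)\neq0$. Together with depth $\ge2$ from normality, the depth is exactly $2$.

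The case $m=2$ is $R(Y_n,H_n)$ itself, handled by the $b=0$ part of the same argument. The only step needing genuine care is the bookkeeping of bidegrees in the intersection numbers; the rest is Künneth plus earlier results.
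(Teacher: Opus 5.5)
Your proposal is correct and follows the paper's proof essentially step for step. Both arguments take base-point-freeness and finiteness from the factors, use the additivity of $2\operatorname{ch}_2(T_X)$ to reduce the inequality to $3(m-1)^2d_n<3\sigma_n$, apply Theorem~\ref{thm:higher}, use K\"unneth for the Segre product, and use $q(Y_n)\ge1$ for depth $2$.

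The only deviation is that you check generalized Cohen--Macaulayness by a K\"unneth case analysis. The paper instead obtains $H^j(X,\cO_X(tH))=0$ for $|t|\gg0$ and $1\le j\le m-1$ directly from Serre vanishing and Serre duality on $X$. The paper's route also covers $m=2$ without special treatment. For $m=2$ your ``$b=0$'' case is not quite right, because on $\PP^0$ every $t$ occurs, so you also need the vanishing of $H^1(Y_n,\cO(tH_n))$ for $t\ll0$; your $a=1$ argument already supplies this.
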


\begin{proof}
$|H|$ is base-point-free because $|H_n|$ and $|\cO(1)|$ are, and $H\cdot C=H_n\cdot\mathrm{pr}_{1*}C+h\cdot\mathrm{pr}_{2*}C>0$ for every curve $C\subset X$, $H_n$ and $h$ being ample and $C$ not contracted by both projections; so $\varphi_H$ is finite. Write $d_n=H_n^2$, $\sigma_n=\sigma(Y_n)$, $q=m-2$, and $h$ for the hyperplane class of $\PP^q$. Then $H^m=\binom m2d_n$, and $T_X=\mathrm{pr}_1^*T_{Y_n}\oplus\mathrm{pr}_2^*T_{\PP^q}$ gives, by the additivity of $2\operatorname{ch}_2$, $K_X^2-2c_2(X)=3\sigma_n[\mathrm{pt}_{Y_n}]+(q+1)h^2$ with no mixed term, hence
\[
\bigl(K_X^2-2c_2(X)\bigr)\cdot H^{m-2}=3\sigma_n+(m-1)\binom{m-2}2d_n ,\qquad (m+1)H^m=(m+1)\binom m2d_n .
\]
Since $(m+1)\binom m2-(m-1)\binom{m-2}2=3(m-1)^2$, the inequality $(m+1)H^m<(K_X^2-2c_2(X))\cdot H^{m-2}$ is equivalent to $(m-1)^2d_n<\sigma_n$, i.e.\ to $7(m-1)^2<3n^2-11$ by \eqref{eq:alln}. Theorem~\ref{thm:higher} applies. The identification with the Segre product is the K\"unneth formula $H^0(X,\cO_X(tH))=H^0(Y_n,\cO(tH_n))\otimes H^0(\PP^{m-2},\cO(t))$; and $R$ is generalized Cohen--Macaulay because $H^i_{\mathfrak m}(R)_t\cong H^{i-1}(X,\cO_X(tH))$ vanishes for $|t|\gg0$ when $2\le i\le m$, by \eqref{eq:lc} in dimension $m$ and Serre vanishing and duality, while $H^0_{\mathfrak m}(R)=H^1_{\mathfrak m}(R)=0$ by normality. Finally $H^2_{\mathfrak m}(R)_0\cong H^1(X,\cO_X)=H^1(Y_n,\cO_{Y_n})$ by the K\"unneth formula, which is nonzero since $q(Y_n)\ge1$ (Section~\ref{sec:example}); with normality this gives $\depth_{\mathfrak m}R=2$.
\end{proof}

\begin{corollary}[the characteristic hypothesis in \cite{ShTa} is essential]\label{cor:shta}
Let $n\ge4$ and let $X=Y_n\times\PP^1$ with $H=H_n\boxtimes\cO_{\PP^1}(1)$, as in Corollary~\ref{cor:products} with $m=3$. Then $X$ is a smooth projective threefold over $\CC$, hence normal of dimension $3$ and locally factorial; $H$ is an ample base-point-free integral Cartier divisor, so that $\cO_X(iH)$ is invertible for every $i\in\ZZ$; the structure sheaf $\cO_X$ is a coherent sheaf all of whose stalks are maximal Cohen--Macaulay; and $R(X,H)$ admits no nonzero finitely generated graded maximal Cohen--Macaulay module. Thus every hypothesis of \textup{\cite[Cor.~4.5(i)]{ShTa}} is satisfied once its positive-characteristic, $F$-finite base field is replaced by an algebraically closed field of characteristic zero, and the implication from the existence of a maximal Cohen--Macaulay sheaf on $X$ to the existence of a graded maximal Cohen--Macaulay module over $R(X,H)$ fails in characteristic zero.
\end{corollary}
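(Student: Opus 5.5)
The plan is to check the hypotheses one by one and then invoke Corollary~\ref{cor:products} with $m=3$. Smoothness and connectedness of $X=Y_n\times\PP^1$ are inherited from the factors, since $Y_n$ is a smooth connected projective surface (Section~\ref{sec:example}). A smooth variety is normal, and its local rings are regular, hence UFDs by Auslander--Buchsbaum; so $X$ is locally factorial. Since $\dim X=3$, the dimension hypothesis of \cite[Cor.~4.5]{ShTa} holds.

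Next, $H=H_n\boxtimes\cO_{\PP^1}(1)$ is an integral Cartier divisor, so $\cO_X(\lfloor iH\rfloor)=\cO_X(iH)$ is invertible for every $i\in\ZZ$. It is base-point-free and ample: this is the first assertion in the proof of Corollary~\ref{cor:products}, which rests on Proposition~\ref{prop:bpf} for $H_n$.

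For the structure sheaf, each stalk $\cO_{X,x}$ is a regular local ring, hence Cohen--Macaulay, and a nonzero module of full depth over itself. So $\cO_X$ is a coherent sheaf with maximal Cohen--Macaulay stalks, and it is nonzero.

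For the nonexistence statement, we apply Corollary~\ref{cor:products} with $m=3$. There the condition reads $7\cdot 4<3n^2-11$, that is $3n^2>39$, that is $n^2>13$, which holds exactly for $n\ge4$; this is consistent with the statement's ``for $m=3$ one may take any $n\ge4$''. That corollary then yields that $R(X,H)$ admits no nonzero finitely generated graded maximal Cohen--Macaulay module.

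Finally, all the listed hypotheses of \cite[Cor.~4.5(i)]{ShTa} are met apart from the characteristic. The local-sheaf side of the claimed equivalence holds, via $\cO_X$, while the graded-module side fails. So the implication from the sheaf to the module fails over $\CC$.

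I expect no real obstacle, since every ingredient has already been established. The only care needed is in the bookkeeping: the numerical check that $m=3$ requires precisely $n\ge4$, and the remark that $F$-finiteness is the one hypothesis that cannot be transferred, so the comparison is with every \emph{other} hypothesis.
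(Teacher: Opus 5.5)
Your proposal is correct and follows essentially the same route as the paper's proof. Both read off every property of $(X,H)$ from Corollary~\ref{cor:products} with $m=3$, via the check $28<3n^2-11\iff n\ge4$, and both note that the stalks of $\cO_X$ are regular local rings, hence factorial and maximal Cohen--Macaulay over themselves.
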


\begin{proof}
All the assertions about $(X,H)$ are contained in Corollary~\ref{cor:products}, which applies because $7(m-1)^2=28<3n^2-11$ for $n\ge4$; a smooth variety is locally factorial, and the stalks $\cO_{X,x}$ are regular local rings, hence maximal Cohen--Macaulay modules over themselves.
\end{proof}

\begin{remark}[other factors]\label{rem:factors}
Nothing in the proof requires the second factor to be a projective space; products of a surface with an arbitrary variety are the form in which \cite[Thm.~1]{Lo} obtains varieties of every dimension carrying no Ulrich bundle, for a very ample polarization on each factor. Let $(Z,D)$ be a smooth connected projective variety of dimension $q=m-2$ with $|D|$ base-point-free and $\varphi_D$ finite, and put $X=Y_n\times Z$, $H=H_n\boxtimes D$ and $\tau_Z=(K_Z^2-2c_2(Z))\cdot D^{q-2}$, with the convention $\tau_Z=0$ for $q\le1$, where the coefficient $\binom{m-2}2$ below vanishes. Since $K_X^2-2c_2(X)=3\sigma_n[\mathrm{pt}_{Y_n}]+(K_Z^2-2c_2(Z))$, one gets $H^m=\binom m2d_nD^q$ and $(K_X^2-2c_2(X))\cdot H^{m-2}=3\sigma_nD^q+\binom{m-2}2d_n\tau_Z$, so that the inequality of Proposition~\ref{prop:higher} fails exactly when
\[
3\,\frac{\sigma_n}{d_n}\ >\ (m+1)\binom m2-\binom{m-2}2\frac{\tau_Z}{D^q} .
\]
For $Z=\PP^{m-2}$ this is $(m-1)^2d_n<\sigma_n$ again. The right-hand side is a constant once $(Z,D)$ is fixed, while $\sigma_n/d_n=(3n^2-11)/7$ is unbounded, so every such $(Z,D)$ occurs for $n\gg0$; the base-point-freeness, the finiteness of $\varphi_H$, the K\"unneth identification and the computation of the depth are unchanged, the last because $H^1(X,\cO_X)=H^1(Y_n,\cO_{Y_n})\oplus H^1(Z,\cO_Z)\ne0$. The choice $Z=\PP^{m-2}$ is the one for which $R(Z,D)$ is a polynomial ring, so that $R(X,H)$ is a Segre product with a polynomial ring, as in \cite{Ma19}.
\end{remark}

\begin{corollary}[complete intersections]\label{cor:ci}
Let $X$ be a smooth connected projective variety of dimension $m$ over $\bk$ and $H$ a divisor with $|H|$ base-point-free and $\varphi_H$ finite. Let $1\le r\le m-2$, let $a_1,\dots,a_r\ge1$, let $X_r\subset X$ be the intersection of general members of $|a_1H|,\dots,|a_rH|$, and put $H_r=H|_{X_r}$. If
\begin{equation}\label{eq:ci}
\Bigl(m+1-r+\sum_{i=1}^ra_i^2\Bigr)H^m<\bigl(K_X^2-2c_2(X)\bigr)\cdot H^{m-2},
\end{equation}
then $(X_r,H_r)$ carries no nonzero aCM bundle, and $R(X_r,H_r)$ is a normal graded domain of dimension $m-r+1$ with smooth punctured spectrum which is not Cohen--Macaulay and admits no nonzero finitely generated graded maximal Cohen--Macaulay module.
\end{corollary}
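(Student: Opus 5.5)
The plan is to apply Proposition~\ref{prop:higher} and Theorem~\ref{thm:higher} directly to the pair $(X_r,H_r)$, once we know it satisfies the standing hypotheses and the numerical inequality. We argue one hypersurface at a time, by induction on $r$.

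\textbf{Step 1: the standing hypotheses.} Let $X_1\in|a_1H|$ be general. Since $|a_1H|$ is base-point-free, Bertini (as in the proof of Proposition~\ref{prop:higher}, via generic smoothness of the incidence variety) makes $X_1$ smooth. Since $a_1H$ is ample and $\dim X\ge3$, $X_1$ is connected by \cite[III.7.9]{Ha}. The restriction $H|_{X_1}$ is still ample and base-point-free, so by the argument of Section~\ref{sec:statements} its morphism is finite. Iterating gives a smooth connected $X_r$ of dimension $m-r\ge2$ with $(X_r,H_r)$ satisfying the hypotheses of Section~\ref{sec:higher}.

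\textbf{Step 2: the numerical inequality.} I will compute $P_{X_r}\cdot H_r^{m-r-2}$, writing $P_Y=2\operatorname{ch}_2(T_Y)$. The normal bundle sequence $0\to T_{X_r}\to T_X|_{X_r}\to\bigoplus_i\cO(a_iH)\to0$ and additivity of $2\operatorname{ch}_2$ give
\[
P_{X_r}=\Bigl(P_X-\sum_ia_i^2H^2\Bigr)\Big|_{X_r}.
\]
Since $[X_r]=\bigl(\prod_ia_i\bigr)H^r$, we get
\[
P_{X_r}\cdot H_r^{m-r-2}=\Bigl(\prod_ia_i\Bigr)\Bigl(P_X\cdot H^{m-2}-\sum_ia_i^2H^m\Bigr),\qquad H_r^{m-r}=\Bigl(\prod_ia_i\Bigr)H^m.
\]
Dividing by $\prod_ia_i>0$, the failure of \eqref{eq:higher} for $(X_r,H_r)$ with dimension $m-r$, namely
\[
(m-r+1)H_r^{m-r}<P_{X_r}\cdot H_r^{m-r-2},
\]
is exactly \eqref{eq:ci}.

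\textbf{Step 3: conclusion.} Proposition~\ref{prop:higher} then excludes nonzero aCM bundles on $(X_r,H_r)$. Theorem~\ref{thm:higher}, applied in dimension $m-r$, gives the assertions about $R(X_r,H_r)$: it is normal, of dimension $m-r+1$, has smooth punctured spectrum, is not Cohen--Macaulay, and has no graded maximal Cohen--Macaulay module.

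\textbf{Expected obstacle.} The only delicate point is Step 1, namely Bertini smoothness and connectedness for a base-point-free but not necessarily very ample system. This is handled exactly as in Proposition~\ref{prop:higher}, because the condition $r\le m-2$ keeps the dimension at least $3$ before each cut. The Chern-class bookkeeping in Step 2 is routine thanks to additivity of $2\operatorname{ch}_2$.
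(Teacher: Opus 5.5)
Your proposal is correct and follows essentially the paper's proof: Bertini, connectedness and restriction give the standing hypotheses, additivity of $2\operatorname{ch}_2(T)$ gives the intersection numbers, and Proposition~\ref{prop:higher} and Theorem~\ref{thm:higher} give the conclusion. The only difference is cosmetic: you apply the normal bundle sequence of the whole complete intersection at once, while the paper iterates single cuts.
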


\begin{proof}
Each $|a_iH|$ is base-point-free with finite associated morphism, $a_iH$ being ample and base-point-free; so the members cut out successively are smooth by the Bertini argument in the proof of Proposition~\ref{prop:higher} and connected by \cite[III.7.9]{Ha}, the dimension being at least $2$ at every step, and $|H_r|$ is base-point-free with finite associated morphism by restriction. For a single cut $X_1\in|a_1H|$, adjunction and the normal bundle sequence give $K_{X_1}=(K_X+a_1H)|_{X_1}$ and $c_2(X_1)=\bigl(c_2(X)+a_1K_XH+a_1^2H^2\bigr)|_{X_1}$, whence $H_1^{m-1}=a_1H^m$ and
\[
\bigl(K_{X_1}^2-2c_2(X_1)\bigr)\cdot H_1^{m-3}=a_1\Bigl[\bigl(K_X^2-2c_2(X)\bigr)\cdot H^{m-2}-a_1^2H^m\Bigr];
\]
so \eqref{eq:higher} fails for $(X_1,H_1)$ exactly when $(m+a_1^2)H^m<(K_X^2-2c_2(X))\cdot H^{m-2}$. For Ulrich bundles the case $r=1$ is the computation of \cite[\S4]{Lo}, and the case $a_1=\dots=a_r=1$ is the equivalence proved in \cite[Lem.~3.1]{Lo}, by which the obstruction descends to all successive hyperplane sections; what the iteration adds is the general multidegree. Iterating, $H_r^{m-r}=\bigl(\prod_ia_i\bigr)H^m$ and $(K_{X_r}^2-2c_2(X_r))\cdot H_r^{m-r-2}=\bigl(\prod_ia_i\bigr)\bigl[(K_X^2-2c_2(X))\cdot H^{m-2}-\bigl(\sum_ia_i^2\bigr)H^m\bigr]$, so that \eqref{eq:higher} fails for $(X_r,H_r)$ exactly under \eqref{eq:ci}. No terms $a_ia_j$ with $i\ne j$ occur: they cancel because $K^2-2c_2=2\operatorname{ch}_2(T)$ is additive. Note also that the restriction of an aCM bundle to $X_r$ is again aCM, so existence on $X$ implies existence on $X_r$; it is the opposite implication that is wanted here, and it does not follow, which is why \eqref{eq:ci} has to be verified on $X_r$ itself. Theorem~\ref{thm:higher} applies.
\end{proof}

\begin{remark}[new surfaces]\label{rem:ci}
For the ambient $X=\PP^N$ with $H=\cO(1)$, condition \eqref{eq:ci} reads $\sum_ia_i^2<r$, which never holds since each $a_i\ge1$. For $H=\cO(b)$ it reads $\bigl(N+1-r+\sum_ia_i^2\bigr)b^2<N+1$, again impossible. For $X=Y_n\times\PP^{m-2}$ and $H=H_n\boxtimes\cO(1)$ it becomes
\[
\sigma_n>\Bigl[(m-1)^2+\tfrac13\Bigl(\sum_ia_i^2-r\Bigr)\tbinom m2\Bigr]d_n ,
\]
which holds for $n\gg0$ since $\sigma_n/d_n=(3n^2-11)/7$ is unbounded.

The iteration is transparent in terms of the excess $\mathcal I(X,H):=\bigl(K_X^2-2c_2(X)\bigr)\cdot H^{m-2}-(m+1)H^m$, by which \eqref{eq:higher} reads $\mathcal I(X,H)\le0$ and \eqref{eq:ci} reads $\mathcal I(X_r,H_r)>0$: the computation in the proof of Corollary~\ref{cor:ci} says
\[
\mathcal I(X_r,H_r)=\Bigl(\prod_ia_i\Bigr)\Bigl[\mathcal I(X,H)-\sum_i(a_i^2-1)H^m\Bigr] .
\]
Hyperplane cuts, all $a_i=1$, therefore preserve the excess exactly -- for Ulrich bundles this invariance is the equivalence in \cite[Lem.~3.1]{Lo} -- while each $a_i\ge2$ costs a strictly positive amount: the obstruction does not propagate of itself to sections of higher degree.

Taking $r=m-2$ one obtains smooth \emph{surfaces} $X_r$ with $H_r^2<\sigma(X_r)$, hence further three-dimensional section rings without graded maximal Cohen--Macaulay modules. None of these surfaces is a product of two curves: for $C_1\times C_2$ one has $K^2=8(g_1-1)(g_2-1)$ and $\chi(\cO)=(g_1-1)(g_2-1)$, so $\sigma=0$, whereas $\sigma(X_r)>H_r^2>0$. For a single cut in a product the same additivity settles it in any dimension. Suppose $\operatorname{ch}_{m-1}(T_X)=0$, as holds for $X=Y_n\times\PP^{m-2}$ with $m\ge4$, both factors then having dimension $<m-1$. For a smooth $D\in|aH|$ the normal bundle sequence gives $\operatorname{ch}_{m-1}(T_D)=-\bigl((aH)^{m-1}/(m-1)!\bigr)|_D$, whence
\[
(m-1)!\int_D\operatorname{ch}_{m-1}(T_D)=-a^mH^m\ne0 ;
\]
a nontrivial product of dimension $m-1$ has this number zero, both factors having dimension $<m-1$. Compare \cite[Lem.~2.1]{Lo}, where the same additivity of the Newton classes is used. That they are not isomorphic to the $Y_k$ themselves needs a separate argument, which we give in the first case only. For $m=3$ and $a_1=1$ the surface $X_1$ is the blow-up of $Y_n$ at the $H_n^2$ base points of a general pencil in $|H_n|$, with $H_1=2\pi^*H_n-E$, $H_1^2=3H_n^2$ and $\sigma(X_1)=\sigma_n-H_n^2$, so that \eqref{eq:ci} reduces to the condition $7(m-1)^2<3n^2-11$ of Corollary~\ref{cor:products} for $m=3$; in particular $X_1$ is not minimal, whereas every $K_{Y_k}$ is ample, so $X_1$ is none of the $Y_k$. For Ulrich bundles these surface sections already occur in \cite[Thm.~1]{Lo}; for $a_1\ge2$ the projection $X_1\to Y_n$ is finite of degree $a_1$: the equation of $X_1\subset Y_n\times\PP^1$ is a form of degree $a_1$ in the two coordinates of $\PP^1$ whose $a_1+1$ coefficients are general members of $H^0(Y_n,a_1H_n)$, and since $|a_1H_n|$ is base-point-free and $a_1+1>2=\dim Y_n$, general such coefficients have no common zero on $Y_n$; hence no fibre $\{y\}\times\PP^1$ is contained in $X_1$.
\end{remark}

A cheaper way to raise the dimension is the polynomial ring $R[x]$, with $\deg x=1$, over a ring $R$ without graded maximal Cohen--Macaulay modules: $x$ is a regular element on any graded maximal Cohen--Macaulay $R[x]$-module $M$, being part of a homogeneous system of parameters, and $M/xM$ is then a graded maximal Cohen--Macaulay $R$-module. The singular locus of $R[x]$ is not isolated, however, which is why the products above are used.

\section{Remarks}\label{sec:remarks}

\begin{remark}[Ulrich bundles and Ulrich modules]\label{rem:ulrich}
Ulrich bundles on $(Y,H)$ are the bundles $\cE$ with $H^i(Y,\cE(-jH))=0$ for all $i$ and $j=1,2$; equivalently, by the Hilbert-polynomial characterization \cite[Prop.~2.1 and Cor.~2.2]{ES}, the aCM bundles with $\chi(\cE(tH))=\tfrac12\rk(\cE)H^2(t+1)(t+2)$; for $H$ very ample they correspond to the graded maximal Cohen--Macaulay modules over $A(Y,H)$ whose minimal graded free resolution over the ambient polynomial ring $P=\Sym_{\bk}H^0(Y,\cO_Y(H))$ is linear and begins in degree zero. The linearity in this characterization refers to the $P$-free resolution; the $A(Y,H)$-free resolution need not be linear: when $A=A(Y,H)$ is a hypersurface ring $P/(f)$ with $\deg f=q\ge2$, for instance, the minimal $A$-free resolution of an Ulrich module is the infinite resolution, $2$-periodic up to a grading shift by $q$, associated with a matrix factorization $\varphi\psi=\psi\varphi=f\cdot\mathrm{id}$ in which $\varphi$ is linear and $\psi$ has degree $q-1$, so that its maps alternate between degrees $1$ and $q-1$ ($1$ and $2$ for a cubic surface, $1$ and $1$ for a quadric). In the present setting that equivalence is immediate from Lemma~\ref{lem:horrocks}: writing $\pi_*\cE=\bigoplus_{j=1}^N\cO(a_j)$ with $N=\rk(\cE)d$, the prescribed Hilbert polynomial forces $\sum_ja_j=0$ and $\sum_ja_j^2=0$, so $\pi_*\cE\cong\cO_{\PP^2}^{\oplus N}$ and the two vanishings follow from those of $H^i(\PP^2,\cO(-1))$ and $H^i(\PP^2,\cO(-2))$; this is the projection characterization of \cite[\S2]{ES}.

The existence of an Ulrich sheaf with full support on every projective variety was asked in \cite{ES}; see \cite{Be} for an introduction. On a surface an Ulrich bundle is semistable \cite[Thm.~2.9]{CHGS}, so that Bogomolov's inequality forces $H^2\ge K_Y^2-8\chi(\cO_Y)$ (\cite{Be17} for Picard rank one, \cite{An} in general); the non-existence of Ulrich bundles on the Hesse pairs is \cite[Thm.~1.2]{An}, proved by Bogomolov's inequality for the (semistable) Ulrich bundle itself, and Lopez \cite{Lo} obtains varieties of every dimension without Ulrich bundles from the numerical obstruction that Section~\ref{sec:higher} extends to aCM bundles. Theorem~\ref{thm:A} and Proposition~\ref{prop:higher} remove the semistability, at the cost of the majorization argument, and exclude all finitely generated graded MCM modules. An Ulrich bundle is aCM with a prescribed Hilbert polynomial and is automatically semistable; the aCM condition prescribes neither the Hilbert polynomial nor a stability property.

Accordingly, whereas the existence of Ulrich bundles is a delicate question, aCM bundles are abundant wherever they have been studied --- on cubic surfaces they have been studied in every rank and classified in rank two \cite{CH,Fa}, and on every arithmetically Cohen--Macaulay embedded variety $\cO_X$ is one, its coordinate ring being a graded maximal Cohen--Macaulay module over itself. No polarized variety outside Theorem~\ref{thm:A} and Proposition~\ref{prop:higher} is known to the author to carry no nonzero aCM bundle at all. 
Both obstructions bear on the fixed polarization and not on the surface: by Coskun--Huizenga \cite[Thm.~1.2]{CoHu}, every smooth complex projective surface carries rank-two Ulrich bundles with respect to $mH$ for all $m\gg0$, in accordance with Remark~\ref{rem:more}, where the multiples of $H_n$ excluded by Theorem~\ref{thm:A} are those with $7m^2<3n^2-11$. On a fixed Hesse surface the two statements combine into a threshold phenomenon for the Veronese subrings of a single ring. Every multiple $mH_n$ is ample and base-point-free (Proposition~\ref{prop:bpf}) and $R(Y_n,mH_n)=R(Y_n,H_n)^{(m)}$ (Remark~\ref{rem:more}). By Theorem~\ref{thm:B}, if
\[
m\ <\ \sqrt{\tfrac{3n^2-11}{7}}
\]
then $R(Y_n,H_n)^{(m)}$ has no graded maximal Cohen--Macaulay module whatsoever; whereas by \cite[Thm.~4.3]{CoHu} there is an $m_0=m_0(Y_n,H_n)$ such that for $m\ge m_0$ it has one of every even rank, and, after increasing $m_0$ if necessary, \cite[Cor.~4.5]{CoHu} shows that the pair $(Y_n,mH_n)$ is of Ulrich wild representation type, so that arbitrarily large families of pairwise non-isomorphic indecomposable ones occur. What happens between the two thresholds we do not know.

We note also that the numerical relations satisfied by an Ulrich bundle on a polarized surface, recorded in \cite[Prop.~4.2]{CoHu}, are \eqref{eq:rr} read at its vertex: the first says exactly that the vertex of $\chi(\cE(t))$ lies at $t=-\tfrac32$, which in the normalization $\nu=c_1(\cE)/\rk\cE$ of Section~\ref{sec:proofA} reads $2\nu\cdot H=3H^2+K_Y\cdot H$ (the parameter denoted by $\nu$ in \cite[Prop.~4.2]{CoHu} is the total slope of $\cE(-H)$; writing this parameter as $\nu_{\mathrm{CH}}$, one has $\nu_{\mathrm{CH}}=\nu-H$, so that the same relation is written there as $2\nu_{\mathrm{CH}}\cdot H=H^2+K_Y\cdot H$; their normalized discriminant is likewise half of ours, $\Delta_{\mathrm{CH}}=\Delta(\cE)/2\rk(\cE)^2=\delta/2$, which accounts for the apparent factor $2$ between the two sets of relations); and the second is the value of \eqref{eq:rr} there, $\delta(\cE)-\alpha(\cE)^2=\tfrac{d-\sigma}4$, the case of equality in Remark~\ref{rem:rr}. The obstruction is compatible with the existence results for aCM bundles on surfaces of Kodaira dimension $\le1$, such as \cite{CH,Fa}: there $\sigma\le1\le d$, and $\sigma\le0$ except on $\PP^2$ (Remark~\ref{rem:signature}), so Theorem~\ref{thm:A} never obstructs.
\end{remark}

\begin{remark}[cohomology tables]\label{rem:bs}
The existence question for Ulrich sheaves raised in \cite{ES} is related to Boij--S\"oderberg theory by \cite[Thm.~5.3]{ES11}: the cone of finite nonnegative rational combinations of the cohomology tables $\bigl(h^i(X,\cF(t))\bigr)_{i,t}$ of coherent sheaves on $X\subset\PP^N$ coincides with the corresponding cone for $(\PP^{\dim X},\cO(1))$ if and only if $X$ carries an Ulrich sheaf. On projective space, the tables of vector bundles admit finite positive rational decompositions into supernatural bundle tables \cite{ES09}; for arbitrary coherent sheaves, the decompositions of \cite{ES10} may be infinite, entrywise convergent series of supernatural sheaf tables, including tables of bundles on linear subspaces extended by zero. The latter decomposition theorem concerns series and must be distinguished from generation by finite combinations.
For vector-bundle tables, Theorem~\ref{thm:A} gives the following exclusion. Let $C_{vb}(Y,H)$ be the set of finite linear combinations, with nonnegative rational coefficients, of the cohomology tables $\gamma(\cE)=\bigl(h^i(Y,\cE(tH))\bigr)_{i,t}$ of vector bundles on $Y$. If $d<\sigma$, then
\[
C_{vb}(Y,H)\cap\bigl\{\gamma:\gamma_{1,t}=0\text{ for all }t\in\ZZ\bigr\}=\{0\},
\]
because all entries of a cohomology table are nonnegative, so that a combination with positive coefficients has vanishing $h^1$-row only if each of its terms does, i.e.\ only if every bundle involved is aCM. On $\PP^2$, by contrast, the tables of the line bundles $\cO(a)$ have vanishing $h^1$-row; so $C_{vb}(Y_n,H_n)$ is not the cone of $\PP^2$, and no nonzero table in it lies on the subspace of tables without intermediate cohomology. This concerns finite combinations of tables of vector bundles only: nothing is claimed about the closure of the cone, about limits of normalized tables, or about coherent sheaves, which is the setting of the cone-comparison theorem of \cite[Thm.~5.3]{ES11} just quoted.
\end{remark}

\section*{Acknowledgements}

This note owes its existence to a short email exchange with David Eisenbud and Frank-Olaf Schreyer concerning \cite{An}. The author is particularly indebted to them for the explicit questions: ``Do there exist aCM vector bundles on $Y$, i.e., $\cE$ with $h^1(\cE(d))=0$ for all $d\in\ZZ$? What is the Boij--S\"oderberg cone of cohomology tables of aCM bundles on $Y$?'' (the twist is written $t$, and $d=H^2$, in this note). Theorem~\ref{thm:A} answers the first in the negative, and Remark~\ref{rem:bs} draws the consequence for the second.

\section*{Statements and declarations}

\noindent\emph{Funding.} The author received no research funding for this work.

\noindent\emph{Competing interests.} The author has no financial interests to disclose.

\noindent\emph{Use of generative AI.} ChatGPT and Claude were used for exploratory computations, literature search and editorial assistance. The arguments of this note are given in full, with the external results used cited explicitly, and the author is solely responsible for the final text.

\end{document}